\setlist[enumerate,1]{leftmargin=1cm}
\theoremstyle{plain}
\newtheorem{theorem}{Theorem}[section]
\newtheorem{proposition}[theorem]{Proposition}
\newtheorem{corollary}[theorem]{Corollary}
\newtheorem{lemma}[theorem]{Lemma}
\theoremstyle{definition}
\newtheorem{definition}[theorem]{Definition}
\theoremstyle{remark}
\newcommand{\ranked}{\textsc{ranked}}
 \DeclareRobustCommand{\checkarg}{\@ifnextchar[{\@witharg}{}}
 \DeclareRobustCommand{\@witharg}[1][]{\ensuremath{\left(#1\right)}}
 \DeclareRobustCommand{\scaleGen}[1]{\@ifnextchar[{\@scalewithargs{#1}}{\odot^{}_{#1}}}
 \def\@scalewithargs#1[#2][#3]{#2 \odot^{}_{#1} #3}
\def\IPspace{\mathcal{I}_\alpha}
\def\dI{d_{\alpha}}
\def\HIPspace{\mathcal{I}_H}
\def\dH{d_H}
\def\dJ{d_{\cJ}}
\def\cJ{\mathcal{J}}
\def\dJ{d_{\cJ}}
\def\fs{\mathbf{s}}
\def\cC{\mathcal{C}}	% continuous functions
\def\cD{\mathcal{D}}
\def\dis{\textnormal{dis}}			% distortion
\def\IPmag#1{\left\|\vphantom{I}#1\right\|}		% total mass of an IP
\def\scaleI{\scaleGen{\textnormal{}}}	% scaling for IPs % MWremoved IP 
\def\restrict#1#2{#1|_{#2}}
\def\Concat{ \mathop{ \raisebox{-2pt}{\Huge$\star$} } }
\def\concat{\star}
\newcommand{\IPLT}{\mathscr{D}}
\newcommand{\ol}[1]{\widebar{#1}}
\def\BR{\mathbb{R}}				% reals
\def\BN{\mathbb{N}}				% natural numbers
\def\BQ{\mathbb{Q}}				% rational numbers
\def\Leb{\textnormal{Leb}}		% Lebesgue measure
\def\e{\epsilon}
\def\d{\delta}
\def\to{\rightarrow}
\def\downto{\downarrow}
\def\upto{\uparrow}
\def\cf{\mathbf{1}}				% characteristic fctn
\def\BQ{\mathbb{Q}}				% probability
\def\cA{\mathcal{A}}	% sometimes an index set
\let\save@mathaccent\mathaccent
\newcommand*\if@single[3]{%
  \setbox0\hbox{${\mathaccent"0362{#1}}^H$}%
  \setbox2\hbox{${\mathaccent"0362{\kern0pt#1}}^H$}%
  \ifdim\ht0=\ht2 #3\else #2\fi
  }
\newcommand*\rel@kern[1]{\kern#1\dimexpr\macc@kerna}
\newcommand{\widebar}{}% initialize
\DeclareRobustCommand*\widebar[1]{\@ifnextchar^{\wide@bar{#1}{0}}{\wide@bar{#1}{1}}}
\newcommand*\wide@bar[2]{\if@single{#1}{\wide@bar@{#1}{#2}{1}}{\wide@bar@{#1}{#2}{2}}}
\newcommand*\wide@bar@[3]{%
  \begingroup
  \def\mathaccent##1##2{%
%Enable nesting of accents:
    \let\mathaccent\save@mathaccent
%If there's more than a single symbol, use the first character instead (see below):
    \if#32 \let\macc@nucleus\first@char \fi
%Determine the italic correction:
    \setbox\z@\hbox{$\macc@style{\macc@nucleus}_{}$}%
    \setbox\tw@\hbox{$\macc@style{\macc@nucleus}{}_{}$}%
    \dimen@\wd\tw@
    \advance\dimen@-\wd\z@
%Now \dimen@ is the italic correction of the symbol.
    \divide\dimen@ 3
    \@tempdima\wd\tw@
    \advance\@tempdima-\scriptspace
%Now \@tempdima is the width of the symbol.
    \divide\@tempdima 10
    \advance\dimen@-\@tempdima
%Now \dimen@ = (italic correction / 3) - (Breite / 10)
    \ifdim\dimen@>\z@ \dimen@0pt\fi
%The bar will be shortened in the case \dimen@<0 !
    \rel@kern{0.6}\kern-\dimen@
    \if#31
      \overline{\rel@kern{-0.6}\kern\dimen@\macc@nucleus\rel@kern{0.4}\kern\dimen@}%
      \advance\dimen@0.4\dimexpr\macc@kerna
%Place the combined final kern (-\dimen@) if it is >0 or if a superscript follows:
      \let\final@kern#2%
      \ifdim\dimen@<\z@ \let\final@kern1\fi
      \if\final@kern1 \kern-\dimen@\fi
    \else
      \overline{\rel@kern{-0.6}\kern\dimen@#1}%
    \fi
  }%
  \macc@depth\@ne
  \let\math@bgroup\@empty \let\math@egroup\macc@set@skewchar
  \mathsurround\z@ \frozen@everymath{\mathgroup\macc@group\relax}%
  \macc@set@skewchar\relax
  \let\mathaccentV\macc@nested@a
%The following initialises \macc@kerna and calls \mathaccent:
  \if#31
    \macc@nested@a\relax111{#1}%
  \else
%If the argument consists of more than one symbol, and if the first token is
%a letter, use that letter for the computations:
    \def\gobble@till@marker##1\endmarker{}%
    \futurelet\first@char\gobble@till@marker#1\endmarker
    \ifcat\noexpand\first@char A\else
      \def\first@char{}%
    \fi
    \macc@nested@a\relax111{\first@char}%
  \fi
  \endgroup
}
\numberwithin{equation}{section}
\numberwithin{figure}{section}
\numberwithin{table}{section}
\begin{document}
\ \vspace{-22pt}

\title{Metrics on sets of interval partitions with diversity}
%
%\author{Noah Forman}, Soumik Pal, Douglas Rizzolo, Matthias Winkel}
%
%
\author[N.~Forman]{Noah Forman$^1$}
\address{$^1$ Department of Mathematics\\ McMaster University\\ Hamilton, ON L8S 4K1\\ Canada}
\email{noahforman@gmail.com}
\author[S.~Pal]{Soumik Pal$^2$}
\address{$^2$ Department of Mathematics\\ University of Washington\\ Seattle, WA 98195\\ USA}
\email{soumikpal@gmail.com}
\author[D.~Rizzolo]{Douglas Rizzolo$^3$}
\address{$^3$ Department of Mathematics\\ University of Delaware\\ Newark, DE 19716\\ USA}
\email{drizzolo@udel.edu}
%\address{D.\ Rizzolo\\ Department of Mathematical Sciences\\ University of Delaware\\ 501 Ewing Hall\\ Newark, DE 19716\\ USA}
\author[M.~Winkel]{Matthias Winkel$^4$}
\address{$^4$ Department of Statistics\\ University of Oxford\\ 24-29 St Giles'\\ Oxford, OX1 3LB\\ UK}
%\address{M.\ Winkel\\ Department of Statistics\\ University of Oxford\\ Oxford, OX1 3LB\\ United Kingdom}
\email{winkel@stats.ox.ac.uk}

%\email{noah.forman@gmail.com, soumikpal@gmail.com, drizzolo@udel.edu,\\ winkel@stats.ox.ac.uk}

\keywords{Interval partition, Poisson--Dirichlet distribution, $\alpha$-diversity}
\thanks{This research is partially supported by NSF grants {DMS-1204840, DMS-1308340, DMS-1612483, DMS-1855568}, UW-RRF grant A112251, and EPSRC grant EP/K029797/1}
\subjclass[2010]{Primary 60J25, 60J60, 60J80; Secondary 60G18, 60G52, 60G55}
\date{\today}
\begin{abstract}
We first consider interval partitions whose complements are Lebesgue-null and introduce a complete metric that induces the same 
topology as the Hausdorff distance (between complements). This is done using correspondences between intervals. Further 
restricting to interval partitions with $\alpha$-diversity, we then adjust the metric to incorporate diversities. We show that this second metric 
space is Lusin. An important feature of this topology is that path-continuity in this topology implies the continuous evolution of diversities. 
This is important in related work on tree-valued stochastic processes where diversities are branch lengths.
\end{abstract}

\maketitle

\ \vspace{-30pt}

\section{Introduction}
\label{sec:intro}

We define interval partitions following Aldous \cite[Section 17]{AldousExch} and Pitman \cite[Chapter 4]{CSP}.

\begin{definition}\label{def:IP_1}
 An \emph{interval partition} is a set $\beta$ of disjoint, open subintervals of some interval $[0,L]$, that cover $[0,L]$ up to a Lebesgue-null set. We write $\IPmag{\beta}$ to denote $L$. We refer to the elements of $\beta$ as its \emph{blocks}. The Lebesgue measure of a block is called its \emph{mass}.  
\end{definition}

Interval partitions of $[0,1]$ appear naturally as representations of discrete distributions. Indeed, we can order the atoms of a discrete 
distribution and consider intervals whose lengths are the masses of atoms. This is useful e.g.\ to simulate from discrete distributions. 
More generally, an interval partition represents a totally ordered and summable collection of real numbers, for example, the interval partition 
generated naturally by the range of a subordinator (see Pitman and Yor \cite{PitmYor92}), or the partition of $[0,1]$ given by the complement of 
the zero-set of a Brownian bridge (Gnedin and Pitman \cite[Example 3]{GnedPitm05}). They also arise from the so-called stick-breaking schemes; see 
\cite[Example 2]{GnedPitm05}. Furthermore, interval partitions occur as limits of \textit{compositions} of natural numbers $n$, i.e.\ sequences 
of positive integers with sum $n$. Interval partitions serve as extremal points in paintbox representations of composition structures on 
$\mathbb{N}$; see Gnedin \cite{Gnedin97}.  

The set of all interval partitions is denoted by $\HIPspace$.  The subscript $H$ indicates that this set is typically endowed with a metric $d_H$ 
under which the distance between $\beta$ and $\gamma$ is the Hausdorff distance between their complements. Then  
$(\HIPspace,d_H)$ is not complete: some Cauchy sequences such as $\{((i-1)/2^n,i/2^n),1\le i\le 2^n\}\cup\{(1,2)\}$, $n\ge 0$, 
do not converge in $(\HIPspace,d_H)$, since the complement of the ``limiting interval partition'' $\{(1,2)\}$ is not Lebesgue-null. 
Our first aim is to define a complete metric $d_H^\prime$ on $\HIPspace$ that induces the same topology as $d_H$. See Section \ref{sectdefns}.

Of particular interest in the study of interval partitions are random interval partitions formed by arranging the coordinates of an 
$(\alpha,\theta)$-Poisson--Dirichlet distributed random variable in a regenerative random order.  These partitions arise both in the study of random trees and in genetics \cite{Kingman1978,Pitman1995,PitmWink09}. One of the important statistics of these partitions is the continuum analogue of the number of parts of an integer composition, called the diversity (see \cite{GnedinRegenerationSurvey, GPY2006alpha}):

\begin{definition}\label{def:diversity_property} 
  If $0<\alpha <1$, we say that an interval partition $\beta\in\HIPspace$ of an interval $[0,L]$ has the \emph{($\alpha$-)diversity property}, or that $\beta$ is an 
  \emph{interval partition with ($\alpha$-)diversity}, if the following limit exists for every $t\in [0,L]$:
  \begin{equation}
   \IPLT^\alpha_{\beta}(t) := \Gamma(1-\alpha) \lim_{h\downto 0}h^\alpha \#\{(a,b)\in \beta\colon\ |b-a|>h,\ b\leq t\}.\label{eq:IPLT}
  \end{equation}
 \end{definition}
 We denote by $\IPspace\subset\HIPspace$ the set of interval partitions $\beta$ that possess the $\alpha$-diversity property. 

In the context of spinal decompositions of random trees the total diversity of an interval partition corresponds to the length of the spine and 
$\IPLT_{\beta}^\alpha(t)$ for $t\in U\in\beta$ corresponds to the height at which a tree of mass $\Leb(U)$ branches off from the spine. In the context of 
genetic models, each block in the interval partition represents the number of individuals in a population with the same genetic type and the 
total diversity represents the genetic diversity. 

In this paper we will fix $0<\alpha<1$ and suppress it from the notation when doing so will not cause confusion.  In particular, we will use 
$\IPLT_{\beta}(t)$ in place of $\IPLT^\alpha_{\beta}(t)$.  We call $\IPLT_{\beta}(t)$ the \emph{diversity} of the interval partition up to 
$t\in[0,L]$. For $U\in\beta$, $t\in U$, we write $\IPLT_{\beta}(U)=\IPLT_{\beta}(t)$, and we write $\IPLT_{\beta}(\infty) := \IPLT_{\beta}(L)$ to 
denote the \emph{total ($\alpha$-)diversity} of $\beta$.

When studying evolving population models \cite{FengWang07,RuggWalkFava13} or evolving random trees \cite{LohrMytnWint18,Paper4} with connections to Poisson--Dirichlet distributions, it is 
natural to ask whether or not the total diversity evolves continuously.  This provides challenges because $\beta\mapsto \IPLT_\beta(\infty)$ is not 
continuous on $\IPspace$ with respect to the topology induced by $d_H$.  

The second aim of this paper is to introduce a metric $\dI$ on $\IPspace$ which generates the same Borel $\sigma$-algebra as $d_H$ and with respect 
to which the diversity function is continuous.  In fact, we would like more, we would like for $\dI$ to be such that bead crushing constructions of 
random trees as in \cite{PitmWink09} can be used to map continuously evolving interval partitions to continuously evolving trees.  Specifically, we 
let $\mathbb{M}$ be the set of (measure-preserving isometry classes of) compact metric measure spaces with the Gromov--Hausdorff--Prokhorov 
topology.  We would like the map $T\colon\IPspace \to \mathbb{M}$ defined by 
$T(\beta) = ([0,\IPLT_\beta(\infty)], d(\cdot,\cdot) , d\IPLT_\beta^{-1})$ to be 
continuous, where $d(\cdot,\cdot)$ is the standard metric on $\mathbb{R}$ and $d\IPLT_\beta^{-1}=\sum_{U\in\beta}\Leb(U)\delta_{\IPLT_\beta(U)}$ is 
the Stieltjes measure associated with the right inverse of $\IPLT_\beta$. 

The structure of this paper is as follows. We define the metrics on $\HIPspace$ and $\IPspace$, state main results and discuss applications in 
Section \ref{sectdefns}. We provide proofs in Section \ref{sectproofs}. 

\section{Definition of metrics and statement of main results}\label{sectdefns}

Fix $0<\alpha<1$. Our definitions of $d_H^\prime$ and $\dI$ are based on the following notion of correspondences between interval partitions, which is motivated by 
the correspondences that can be used to define the Gromov--Hausdorff metric \cite{EPW06}.
%\begin{definition} \label{def:IP:metric}%\label{def:IP:correspondence}%\label{def:IP:distortion}
 We adopt the standard discrete mathematics notation $[n] := \{1,2,\ldots,n\}$.

 For $\beta,\gamma\in \HIPspace$, a \emph{correspondence} between $\beta$ and $\gamma$ is a finite sequence of ordered pairs of intervals $(U_1,V_1),\ldots,(U_n,V_n) \in \beta\times\gamma$, $n\geq 0$, where the sequences $(U_j)_{j\in [n]}$ and $(V_j)_{j\in [n]}$ are each strictly increasing in the left-to-right ordering of the interval partitions.
%\end{definition}

As in the case of the Gromov--Hausdorff metric, we need the notion of the distortion of a correspondence. Specifically
%\begin{definition}
 the $\alpha$-\emph{distortion} of a correspondence $(U_j,V_j)_{j\in [n]}$ between $\beta,\gamma\in\IPspace$, denoted by $\dis_\alpha(\beta,\gamma,(U_j,V_j)_{j\in [n]})$, is defined to be the maximum of the following four 
  quantities:
 \begin{enumerate}[label=(\roman*), ref=(\roman*)]
  \item $\sum_{j\in [n]}|\Leb(U_j)-\Leb(V_j)| + \IPmag{\beta} - \sum_{j\in [n]}\Leb(U_j)$, \label{item:IP_m:mass_1}
  \item $\sum_{j\in [n]}|\Leb(U_j)-\Leb(V_j)| + \IPmag{\gamma} - \sum_{j\in [n]}\Leb(V_j)$, \label{item:IP_m:mass_2}
  \item $\sup_{j\in [n]}|\IPLT_{\beta}(U_j) - \IPLT_{\gamma}(V_j)|$,
  \item $|\IPLT_{\beta}(\infty) - \IPLT_{\gamma}(\infty)|$.
 \end{enumerate}
 Similarly, the \emph{Hausdorff distortion} of a correspondence $(U_j,V_j)_{j\in [n]}$ between $\beta,\gamma\in\HIPspace$, denoted by 
$\dis_H(\beta,\gamma,(U_j,V_j)_{j\in[n]})$, is defined to be the maximum of (i)-(ii).
% \end{definition}
 
% Note that the second of these quantities depends only on the partitions $\beta$ and $\gamma$ and not on the correspondence.  
We are now prepared to define $d_H^\prime$ and $\dI$. 
 
 \begin{definition}\label{def:IP:metric}
 For $\beta,\gamma\in\HIPspace$ we define
 \begin{equation}\label{eq:IPH:metric_def}
  d_H^\prime(\beta,\gamma) := \inf_{n\ge 0,\,(U_j,V_j)_{j\in [n]}}\dis_H\big(\beta,\gamma,(U_j,V_j)_{j\in [n]}\big),
 \end{equation}
 where the infimum is over all correspondences from $\beta$ to $\gamma$.

 For $\beta,\gamma\in\IPspace$ we similarly define
 \begin{equation}\label{eq:IP:metric_def}
  \dI(\beta,\gamma) := \inf_{n\ge 0,\,(U_j,V_j)_{j\in [n]}}\dis_\alpha\big(\beta,\gamma,(U_j,V_j)_{j\in [n]}\big).
 \end{equation}
\end{definition}

We will relate $d_H^\prime$ to the Hausdorff metric on compact subsets of $[0,\infty)$. Specifically, when applied to the complements
$C_\beta:=[0,\IPmag{\beta}]\setminus\bigcup_{U\in\beta}U$, the Hausdorff metric gives rise to a metric
$$d_H(\beta,\gamma)=\inf\left\{\varepsilon>0\colon C_\beta\subseteq C_\gamma^\varepsilon\mbox{ and }C_\gamma\subseteq C_\beta^\varepsilon\right\},$$
on $\HIPspace$, where $C^\varepsilon=\{s\in[0,\infty)\colon\inf_{t\in C}|t-s|\le\varepsilon\}$ is the $\varepsilon$-thickening of $C$.

Our main results are as follows.

\begin{theorem}\label{thmdHprime}
  \begin{enumerate}
    \item[\rm(a)] $d_H^\prime\colon\HIPspace^2\to[0,\infty)$ is a metric on $\HIPspace$.
    \item[\rm(b)] $d_H$ and $d_H^\prime$ generate the same separable topology.
    \item[\rm(c)] $(\HIPspace,d_H^\prime)$ is a complete metric space, while $(\HIPspace,d_H)$ is not complete.
    \item[\rm(d)] $\IPspace$ is a Borel subset of $\HIPspace$ that is dense in $\HIPspace$. 
  \end{enumerate}
\end{theorem}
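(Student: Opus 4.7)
The plan is to treat (a)--(d) in sequence, with composition of correspondences as the main tool for (a)--(c) and an approximation plus measurability argument for (d). For (a), symmetry, nonnegativity, and finiteness of $d_H^\prime$ are immediate from the definition. To see $d_H^\prime(\beta,\beta)=0$, the correspondence pairing the finitely many $\beta$-blocks of mass $>\epsilon$ with themselves has distortion $\sum_{U\in\beta,\,\Leb(U)\le\epsilon}\Leb(U)\downto 0$ as $\epsilon\downto 0$. Conversely, if $d_H^\prime(\beta,\gamma)=0$, then for every $\delta>0$ any correspondence of distortion $<\delta$ must include each $\beta$-block of mass $>\delta$ (else condition (i) would exceed $\delta$), matched to a $\gamma$-block of nearly equal mass at the same cumulative left-position; sending $\delta\downto 0$ yields a bijection forcing $\beta=\gamma$. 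For the triangle inequality, given correspondences $C_1=(U_j,V_j)_{j\in[m]}$ between $\beta,\gamma$ and $C_2=(V'_k,W_k)_{k\in[n]}$ between $\gamma,\delta$, let $C_3$ consist of all pairs $(U_j,W_k)$ with $V_j=V'_k$. The key bookkeeping step is $\sum_{V_j\notin\{V'_k\}}\Leb(U_j)\le(\IPmag{\gamma}-\sum_k\Leb(V'_k))+\sum_{V_j\notin\{V'_k\}}|\Leb(U_j)-\Leb(V_j)|$; substituting this into condition (i) for $C_3$ and splitting $|\Leb(U_j)-\Leb(W_k)|$ via the $\gamma$-intermediary reassembles the terms exactly as condition (i) for $C_1$ plus condition (ii) for $C_2$, giving $\dis_H(C_3)\le\dis_H(C_1)+\dis_H(C_2)$. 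The same composition handles conditions (iii) and (iv) for $\dI$ via the ordinary triangle inequality applied to $\IPLT$ values.

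For (b), if $d_H(\beta_n,\beta)\to 0$, then for each fixed $\epsilon>0$ the finitely many $\beta$-blocks of mass $>\epsilon$ have, for $n$ large, unique matching $\beta_n$-blocks of nearly equal position and mass (identified via points close to each midpoint); the resulting correspondence has distortion at most (mass-errors on big blocks) $+\sum_{\Leb(U)\le\epsilon}\Leb(U)+|\IPmag{\beta_n}-\IPmag{\beta}|$, of which the first and third terms vanish as $n\to\infty$, leaving $\limsup_n d_H^\prime(\beta_n,\beta)\le\sum_{\Leb(U)\le\epsilon}\Leb(U)$, which itself vanishes as $\epsilon\downto 0$. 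Conversely, a correspondence of small $\dis_H$ forces the left endpoints of matched blocks to nearly agree (left-positions being cumulative sums of block masses), bounding the Hausdorff distance of complements. Separability follows from the countable dense family constructed in (d).

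For (c), non-completeness of $d_H$ is witnessed by the example in the introduction. For completeness of $d_H^\prime$, extract a subsequence $(\beta_{n_k})$ with $d_H^\prime(\beta_{n_k},\beta_{n_{k+1}})<2^{-k}$, realized by correspondences $C_k$. Each $\beta_{n_k}$-block of mass $>2^{-k+2}$ is forced to be matched by $C_k$, so chaining these matchings produces, for each persistent block, a sequence of matched blocks in $(\beta_{n_j})_{j\ge k}$ whose left-positions and masses are Cauchy with geometrically summable increments. Their limits assemble a partition $\beta$; total mass equals $\lim\IPmag{\beta_{n_k}}$ by summability of the mass deficits, so $\beta\in\HIPspace$, and restricting the $C_k$'s to the limit blocks witnesses $\beta_n\to\beta$ in $d_H^\prime$.

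For (d), density follows by approximation: given $\beta\in\HIPspace$, take a $1/n$-net $F_n$ of the compact set $C_\beta\subset[0,\IPmag{\beta}]$ containing $\{0,\IPmag{\beta}\}$; the connected components of $[0,\IPmag{\beta}]\setminus F_n$ form a finite partition $\gamma_n\in\IPspace$ (any finite partition trivially has $\IPLT_{\gamma_n}\equiv 0$) with $d_H(\gamma_n,\beta)\le 1/n$. For the Borel claim, $\beta\mapsto h^\alpha\#\{(a,b)\in\beta:|b-a|>h,\,b\le t\}$ is Borel for each fixed $(h,t)$, hence its liminf and limsup as $h\downto 0$ are Borel functions of $\beta$ for each $t$. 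The main obstacle is reducing the uncountable condition ``liminf $=$ limsup at every $t\in[0,\IPmag{\beta}]$'' to a countable intersection of Borel events; the plan is to exploit monotonicity of both limits in $t$ to restrict attention to $t$ in a countable dense set, together with a separate analysis ensuring that equality on this dense set, combined with right-continuity of the counting function in $t$, propagates existence of the limit to every $t\in[0,\IPmag{\beta}]$.
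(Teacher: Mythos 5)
Parts (a)--(c) of your proposal are essentially sound. For (a) and (b) you follow the same route as the paper (splitting/composing correspondences for the triangle inequality; matching the finitely many large blocks via midpoints to compare $d_H$ and $d_H^\prime$). For (c) you argue completeness directly by chaining correspondences along a rapidly Cauchy subsequence, whereas the paper embeds $(\HIPspace,d_H^\prime)$ as a closed subset of a complete space $(\cJ,d_\cJ)$ of pairs (partition, diversity-like function). Your outline works, but the step ``total mass equals $\lim_k\IPmag{\beta_{n_k}}$ by summability of the mass deficits'' conceals the real issue: the chains you track at stage $k$ start only from blocks of mass $>2^{-k+2}$, so you must additionally show that the total mass carried by the \emph{smaller} blocks of $\beta_{n_k}$ tends to $0$ as $k\to\infty$. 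This uniform tail control is not a consequence of the per-stage unmatched-mass bounds alone; it follows from $\ell^1$-convergence of the ranked mass sequences, which the correspondences do give you (this is precisely the estimate the paper proves first), but it needs to be said. A smaller nit: for separability in (b) you point to ``the countable dense family constructed in (d)'', but (d) only produces, for each $\beta$, a sequence of finite approximants; to get a countable dense set you should restrict to finite partitions with rational block lengths.

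The genuine gap is in the Borel part of (d). Writing $\overline{D}_\beta(t)$ and $\underline{D}_\beta(t)$ for the $\limsup$ and $\liminf$ in \eqref{eq:IPLT}, your plan is that equality on a countable dense set of times, together with right-continuity of $t\mapsto\#\{(a,b)\in\beta: b-a>h,\,b\le t\}$, propagates existence of the limit to every $t$. This propagation is valid only at continuity points of the monotone envelopes: if $\sup_{q<t}\IPLT_\beta(q)<\inf_{q>t}\IPLT_\beta(q)$ (blocks accumulating at $t$), sandwiching gives nothing at $t$ itself, and the limit there can genuinely fail. Concretely, take masses $m^{(j)}=j^{-1/\alpha}$, split them into two subfamilies so that $h^\alpha\#\{j\in J:m^{(j)}>h\}$ oscillates for one subfamily $J$ while the full family has a limit, and place the $J$-blocks with right endpoints accumulating at some $t_0$ from the left and the complementary blocks accumulating at $t_0$ from the right: then $\IPLT_\beta(q)$ exists for every $q\ne t_0$ but not at $t_0$, and $t_0$ can be chosen outside any prescribed countable set. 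So the set you describe as a countable intersection of Borel events strictly contains $\IPspace$, and the ``separate analysis'' you defer is exactly the hard part. The paper resolves it by passing to the right-continuous modification $\IPLT_\beta^+$ of \eqref{eq:div_rt_cts}, checking its existence and values on rationals, and then \emph{measurably enumerating the jump times} of the resulting monotone right-continuous function (via point-process measurability results) and imposing existence of $\IPLT_\beta$ at those countably many, $\beta$-dependent times. Some such measurable selection of the exceptional times is unavoidable, and your proposal as written does not supply it.
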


\begin{theorem}\label{thmdI}
  \begin{enumerate}
    \item[\rm(a)] $\dI\colon\IPspace^2\to [0,\infty)$ is a metric on $\IPspace$.
    \item[\rm(b)] The topology on $\IPspace$ generated by $\dI$ is strictly stronger than the subset topology generated by $d_H$ or $d_H^\prime$.   
    \item[\rm(c)] The Borel $\sigma$-algebra generated by $\dI$ equals the one generated by $d_H$ or $d_H^\prime$.
    \item[\rm(d)] $(\IPspace, \dI)$ is Lusin, i.e. homeomorphic to a Borel subset of a compact metric space.
  \end{enumerate}
\end{theorem}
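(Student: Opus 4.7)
My plan is to establish (a), (b), (d) in order and deduce (c) from (d). The foundational observation throughout is that $\dis_\alpha\geq\dis_H$ by direct comparison of their defining conditions, hence $\dI\geq d_H^\prime$ on $\IPspace\times\IPspace$. For part (a), symmetry and non-negativity are immediate, and $\dI(\beta,\gamma)=0$ forces $d_H^\prime(\beta,\gamma)=0$, whence $\beta=\gamma$ by Theorem~\ref{thmdHprime}(a). For the triangle inequality, given correspondences $(U_j,V_j)_{j\in[n]}$ between $\beta,\gamma$ and $(V'_k,W_k)_{k\in[m]}$ between $\gamma,\delta$, I concatenate them by extracting the ordered subsequence of pairs $(U_j,W_k)$ for which $V_j=V'_k$; conditions (iii) and (iv) of $\dis_\alpha$ of the composition are controlled via the triangle inequality on $\mathbb{R}$ applied to matched diversities, while (i) and (ii) are handled exactly as in the Hausdorff case of Theorem~\ref{thmdHprime}(a). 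For part (b), $\dI\geq d_H^\prime$ gives that the $\dI$-topology refines the $d_H^\prime$-topology. For strictness, let $\beta$ be the interval partition of $[0,1]$ whose blocks, ordered left to right, have lengths $ck^{-1/\alpha}$ for $k\geq 1$, with $c>0$ chosen so they sum to $1$; since $\#\{k\colon ck^{-1/\alpha}>h\}\sim(c/h)^\alpha$ as $h\downto 0$, we have $\IPLT_\beta(\infty)=\Gamma(1-\alpha)c^\alpha>0$. Let $\beta_n$ be the finite partition obtained by keeping the first $n$ blocks of $\beta$ and replacing the remainder of $[0,1]$ by one filler block; each $\beta_n$ has zero diversity, but the Hausdorff distance between complements is at most $\sum_{k>n}ck^{-1/\alpha}\to 0$, so $\beta_n\to\beta$ in $d_H^\prime$, while condition (iv) forces $\dI(\beta_n,\beta)\geq\IPLT_\beta(\infty)$.

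For part (d), I will exhibit a topological embedding $\Phi\colon(\IPspace,\dI)\to\HIPspace\times\mathcal{C}$, where $\mathcal{C}=C([0,\infty),\mathbb{R})$ carries the topology of uniform convergence on compact sets, by setting $\Phi(\beta)=(\beta,f_\beta)$ with $f_\beta(t)=\IPLT_\beta(t\wedge\IPmag{\beta})$ extended by its final value beyond $\IPmag{\beta}$; the $h\downto 0$ scaling in~\eqref{eq:IPLT} makes $f_\beta$ continuous and non-decreasing. Continuity of $\Phi$ under $\dI$ unpacks from the four parts of $\dis_\alpha$, using monotonicity of $f_\beta$ to sandwich its values on unmatched regions between consecutive matched ones. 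Continuity of $\Phi^{-1}$ on $\Phi(\IPspace)$---the main technical obstacle---requires showing that if $\beta_n\to\beta$ in $d_H^\prime$ and $f_{\beta_n}\to f_\beta$ uniformly, then correspondences with small $\dis_\alpha$ can be built from near-optimal $d_H^\prime$-correspondences, verifying that (iii) is automatically small because matched blocks have nearby right endpoints at which the uniformly close $f$'s are close, and (iv) is small because $f_\beta(t)\to\IPLT_\beta(\infty)$ as $t\uparrow\IPmag{\beta}$. Finally, for each fixed $h,t>0$, the functional $\beta\mapsto\#\{U\in\beta\colon|U|>h,\,U\subseteq[0,t]\}$ is $d_H^\prime$-Borel (large blocks persist under small $d_H^\prime$-perturbations), so $\beta\mapsto f_\beta$ is $d_H^\prime$-Borel into $\mathcal{C}$ as a pointwise limit along rational $h\downto 0$, and the graph $\Phi(\IPspace)$ is therefore Borel in the Polish space $\HIPspace\times\mathcal{C}$.

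Part (c) then follows by a standard transfer argument. One inclusion of Borel $\sigma$-algebras is immediate from $\dI\geq d_H^\prime$. For the reverse, if $A\subseteq\IPspace$ is $\dI$-Borel, then $\Phi(A)$ is Borel in $\Phi(\IPspace)$ by the homeomorphism, hence Borel in $\HIPspace\times\mathcal{C}$ since $\Phi(\IPspace)$ is itself Borel, and therefore $A=\Phi^{-1}(\Phi(A))$ is $d_H^\prime$-Borel by the $d_H^\prime$-Borel measurability of $\Phi$.
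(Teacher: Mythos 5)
Parts (a) and (b) of your proposal are correct and essentially follow the paper's route: the triangle inequality by splicing two correspondences along their common middle blocks, the inequality $d_H^\prime\leq\dI$ for one inclusion of topologies, and a sequence of finite-block partitions converging in $d_H^\prime$ but not in $\dI$ for strictness.

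Your proof of (d) has a genuine gap, and it is exposed by your own example from (b). You assert that $f_\beta(t)=\IPLT_\beta(t\wedge\IPmag{\beta})$ is continuous, so that $\Phi$ maps into $\HIPspace\times C([0,\infty))$. But membership in $\IPspace$ only requires the limit in \eqref{eq:IPLT} to exist at each $t$; it does not make $t\mapsto\IPLT_\beta(t)$ continuous. Indeed, for your partition $\eta$ with blocks of length $ck^{-1/\alpha}$ laid out in decreasing order, $\IPLT_\eta(t)=0$ for every $t<1$ (only finitely many blocks end before $t$) while $\IPLT_\eta(1)=\Gamma(1-\alpha)c^\alpha>0$: the diversity function jumps at the accumulation point of the small blocks. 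So $\Phi$ does not take values in $C([0,\infty))$. The defect is not repaired by passing to monotone c\`adl\`ag functions with the uniform metric, because $\Phi$ then fails to be $\dI$-continuous: with $D=\IPLT_\eta(\infty)$, the partitions $\beta=\{(0,1)\}\concat\eta$ and $\gamma_\epsilon=\{(0,1-\epsilon)\}\concat\eta$ satisfy $\dI(\beta,\gamma_\epsilon)\leq\epsilon$ (match the leading blocks and the $n$ largest blocks of the two copies of $\eta$ and let $n\to\infty$; all matched diversities vanish and the total diversities agree), yet $\sup_t|f_\beta(t)-f_{\gamma_\epsilon}(t)|=D$ because the jump sits at $2$ in one and at $2-\epsilon$ in the other. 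Hence $\Phi$ is not a topological embedding into the product space, and the Lusin conclusion does not follow from this construction. Since your part (c) is deduced from (d), it inherits the gap.

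This is precisely why the paper does not use a product topology. It introduces a bespoke space $(\cJ,d_\cJ)$ of pairs $(\eta,f)$ with $f$ right-continuous, increasing and constant on blocks, metrized by the same correspondence recipe as $\dI$ but with $f$ in place of the diversity, proves $(\cJ,d_\cJ)$ complete and separable, and then shows that $\iota(\IPspace)$ is Borel there. That last step (Lemma \ref{lmcompl}(ii)) is itself delicate: it needs the auxiliary quantity $\IPLT^+_\beta$, a measurable enumeration of the jump times of $f$, and a check that diversity exists at those jump times. Your one-line ``pointwise limit over rational $h$'' argument would at best give Borel-ness of $\{\beta\colon\IPLT_\beta(t)\text{ exists}\}$ for countably many fixed $t$, not simultaneously for all $t\in[0,\IPmag{\beta}]$. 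For (c) the paper argues directly, writing each $\dI$-ball as a countable Boolean combination of $d_H^\prime$-Borel sets via the approximants $(\beta_n,D_{\beta,n})$; your transfer argument would be fine in principle, but only once a correct version of (d) is in place.
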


\newcommand{\cSd}{\mathcal{S}^{\downarrow}}

We prove these results in Section \ref{sectproofs}. Before we do so, let us note some of the consequences, which motivated us to
introduce these metrics, and which also demonstrate some further connections to other metrics on interval partitions and related notions. 
Denote by $\mathcal{M}$ the set of compactly supported finite Borel measures on $[0,\infty)$, equipped with the topology of weak convergence, and by
$\cSd=\{(x_k)_{k\ge 1}\colon x_1\ge x_2\ge \cdots\ge 0\mbox{ and }\sum_{k\ge 1}x_k<\infty\}$ the space of summable decreasing sequences
equipped with the $\ell_1$ metric. 

\begin{theorem}\label{thmcont} 
  \begin{enumerate}
    \item[\rm(a)] The map $M\colon\IPspace\!\to\!\mathcal{M}$, $M(\beta)\!=\!\sum_{U\in\beta}\Leb(U)\delta_{\IPLT_\beta(U)}$ is continuous.
    \item[\rm(b)] The diversity map $\beta\mapsto\IPLT_\beta(\infty)$ is $\dI$-continuous on $\IPspace$, but not $\dH$-continuous on $\IPspace$. 
    \item[\rm(c)] The map $\ranked\colon\IPspace\to\cSd$, that associates with $\beta\in\IPspace$ the sequence of decreasing order statistics
      of $(\Leb(U),U\in\beta)$, is continuous.
  \end{enumerate}
\end{theorem}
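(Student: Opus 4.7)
The plan is to handle each of the three parts as a Lipschitz estimate that follows directly from the definition of $\dis_\alpha$, together with one $d_H$-counterexample for the negative half of (b).

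For (b), the $\dI$-continuity is immediate from clause (iv) of $\dis_\alpha$, which gives $|\IPLT_\beta(\infty) - \IPLT_\gamma(\infty)| \leq \dis_\alpha(\beta,\gamma,(U_j,V_j)_{j\in[n]})$ for every correspondence; taking the infimum yields $1$-Lipschitz continuity under $\dI$. To rule out $\dH$-continuity, I would take $\beta = \{(0,1)\}$ (with $\IPLT_\beta(\infty) = 0$) and set $\beta_n = \gamma_n \cup \{(1/n, 1)\} \in \IPspace$, where $\gamma_n$ is any interval partition of $[0,1/n]$ with $\alpha$-diversity equal to $1$ (for instance, built from block lengths $a_k \sim c k^{-1/\alpha}$ suitably scaled so that the total mass is $1/n$ and diversity equals $1$). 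The complement of $\beta_n$ lies in $[0,1/n] \cup \{1\}$, so $d_H(\beta_n,\beta) \leq 1/n \to 0$; meanwhile the macroscopic block $(1/n,1)$ contributes nothing to the $h \downto 0$ limit defining $\IPLT$, and $\IPLT_{\beta_n}(\infty) = 1$ for all $n$.

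For (a), I would show $M$ is $1$-Lipschitz from $(\IPspace, \dI)$ into $(\mathcal{M}, d_P)$, where $d_P$ denotes the Lévy--Prokhorov metric. Since $d_P$ metrises weak convergence on finite Borel measures on $[0,\infty)$ (with total-mass convergence automatic by taking $A = [0,\infty)$ in the defining inequality), this suffices. Given a correspondence $(U_j,V_j)_{j\in[n]}$ with $\dis_\alpha < \epsilon$ and a closed set $A \subseteq [0,\infty)$, I would split $M(\beta)(A)$ into the matched contribution $\sum_{j:\IPLT_\beta(U_j)\in A}\Leb(U_j)$ and the unmatched tail. Clause (iii) sends $\IPLT_\gamma(V_j)$ into $A^\epsilon$ for every matched $j$ with $\IPLT_\beta(U_j) \in A$; replacing $\Leb(U_j)$ by $\Leb(V_j)$ costs at most $|\Leb(U_j) - \Leb(V_j)|$; and clause (i) bounds the sum of these costs together with the unmatched $\beta$-mass by $\epsilon$. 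The resulting estimate $M(\beta)(A) \leq M(\gamma)(A^\epsilon) + \epsilon$, combined with the symmetric bound via (ii), gives $d_P(M(\beta),M(\gamma)) \leq \dI(\beta,\gamma)$.

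For (c), I expect a $2$-Lipschitz bound into $(\cSd, \ell^1)$. Given a correspondence with $\dis_\alpha < \epsilon$, I would arrange the block lengths $\{\Leb(U):U\in\beta\}$ and $\{\Leb(V):V\in\gamma\}$ as summable nonnegative sequences and consider the specific pairing that matches $\Leb(U_j)$ with $\Leb(V_j)$ and each unmatched block with $0$. The $\ell^1$ cost of this pairing is
\[
  \sum_j |\Leb(U_j) - \Leb(V_j)| + \Bigl(\IPmag{\beta} - \sum_j \Leb(U_j)\Bigr) + \Bigl(\IPmag{\gamma} - \sum_j \Leb(V_j)\Bigr),
\]
which by adding clauses (i) and (ii) is at most $2\epsilon$. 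The classical rearrangement inequality (extended from the finite case to summable sequences by truncating to the first $N$ coordinates and letting $N\to\infty$) ensures that decreasing rearrangement minimises $\ell^1$ cost over all bijections, so the $\ell^1$ distance between the ranked sequences is at most $2\epsilon$. The main conceptual work in all three parts is simply choosing the right matching and invoking the appropriate clauses of $\dis_\alpha$; the only mild subtlety is in (b), where one must verify that partitions with prescribed diversity on arbitrarily small intervals exist, which follows from the standard scaling of block lengths $k^{-1/\alpha}$.
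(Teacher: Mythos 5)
Your proposal is correct, and for (a), (c), and the positive half of (b) it is essentially the paper's argument: the authors prove (a) by exactly the Prokhorov comparison you describe, deduce the $\dI$-continuity in (b) from clause (iv) in passing, and, although they dismiss (c) as ``elementary,'' the estimate they use elsewhere (in the completeness proof for $(\cJ,d_\cJ)$) is precisely your bound $\ell^1(\ranked(\beta),\ranked(\gamma))\le \mathrm{(i)}+\mathrm{(ii)}\le 2\dI(\beta,\gamma)$ via the rearrangement inequality. The one genuine divergence is the counterexample for the failure of $d_H$-continuity. The paper fixes a single $\beta\in\IPspace$ with continuous diversity and $\IPLT_\beta(\infty)>0$ and keeps only its $n$ largest blocks, so the approximants have diversity $0$ and $d_H$-converge to $\beta$; this needs no construction beyond the existence of one such $\beta$. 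You go the other way, appending to the block $(1/n,1)$ a dust partition of $[0,1/n]$ of diversity $1$, so the approximants have diversity $1$ while the limit $\{(0,1)\}$ has diversity $0$. This also works, but ``suitably scaled'' is slightly off: by Lemma \ref{lem:IP:scale}, scaling a fixed partition by $c$ multiplies mass by $c$ and diversity by $c^\alpha$, so scaling alone cannot shrink the mass to $1/n$ while keeping the diversity bounded away from $0$ (indeed $(1/n)\scaleI\eta$ has diversity $n^{-\alpha}\IPLT_\eta(\infty)\to 0$, which would break the example). The repair is easy and consistent with your ``$a_k\sim ck^{-1/\alpha}$'': take block lengths $c(k+K_n)^{-1/\alpha}$ with $K_n$ large, which keeps the diversity at $\Gamma(1-\alpha)c^\alpha$ while making the total mass arbitrarily small. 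With that one clarification your argument is complete; the paper's version of (b) is marginally more economical.
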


The proof of (a) follows easily by comparing the $\dI$-metric with the Prokhorov metric
$$d_P(\mu,\nu)=\inf\{\varepsilon>0\colon\mu(C)\le\nu(C^\varepsilon)+\varepsilon\mbox{ for all compact }C\subset[0,\infty)\}$$
Indeed, if $\dI(\beta,\gamma)<\varepsilon$, then there is a correspondence of distortion at most $\varepsilon$. By (i) and (ii), this 
correspondence matches, up to $\varepsilon$, all mass of blocks of $\beta$ and $\gamma$, which $M(\beta)$ and $M(\gamma)$ place onto $[0,\infty)$ 
at locations that, by (iii) are at most $\varepsilon$ apart. Taking into account (iv), this also entails the $\dI$-continuity claimed in 
(b). The continuity claimed in (c) is elementary.

To see that $\dH$-continuity fails in (b), consider any $\beta\in\IPspace$ with continuous $\IPLT_\beta$ and $\IPLT_\beta(\infty)>0$. Let $\beta_n$
be the interval partition obtained from $\beta$ by deleting all but the $n$ longest intervals. Then $d_H(\beta_n,\beta)\rightarrow 0$, but 
$\IPLT_{\beta_n}(\infty)=0$ does not converge to $\IPLT_\beta(\infty)>0$. 

We can combine (a) and (b) by representing $\beta$ as a (single-branch) tree $([0,\IPLT_\beta(\infty)],d,M(\beta))$ in the space $\mathbb{T}$ of 
isometry classes of compact rooted and weighted $\mathbb{R}$-trees equipped with the Gromov--Hausdorff--Prokhorov metric. Here, $d$ is the 
Euclidean metric of $\mathbb{R}$ restricted to $[0,\IPLT_\beta(\infty)]$. With reference to the correspondence definition of this metric in 
\cite[Proposition 6]{M09}, this can be expressed as follows.

\begin{corollary}\label{corT}
%  \begin{enumerate}
%    \item 
  The map $T\colon \IPspace\to \mathbb{T}$ defined by $T(\beta) = ([0,\IPLT_\beta(\infty)], d, M(\beta))$ is continuous.
%  \end{enumerate}
\end{corollary}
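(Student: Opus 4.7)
The plan is to show that $T$ is in fact $1$-Lipschitz, by invoking the characterisation---equivalent to the correspondence definition of \cite[Proposition 6]{M09}---according to which $d_{\mathrm{GHP}}$ between two compact rooted weighted metric spaces is at most $\varepsilon$ whenever they admit isometric embeddings into a common metric space under which the root-to-root distance, the Hausdorff distance of the underlying sets, and the Prokhorov distance of the pushed-forward measures are each at most $\varepsilon$.

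For the embedding I would simply include both $[0,\IPLT_\beta(\infty)]$ and $[0,\IPLT_\gamma(\infty)]$ isometrically in $([0,\infty),|\cdot|)$ via the identity. Under this embedding the two roots both coincide with $0$, so the root-to-root distance is $0$; and the Hausdorff distance between the two segments equals $|\IPLT_\beta(\infty)-\IPLT_\gamma(\infty)|$, which is bounded by $\dI(\beta,\gamma)$ via distortion condition (iv). This handles two of the three requirements essentially for free.

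The substantive step is the Prokhorov bound $d_P(M(\beta),M(\gamma))\le\dI(\beta,\gamma)$ on $[0,\infty)$, for which I would recycle the estimate already outlined after Theorem \ref{thmcont}. Given a correspondence $(U_j,V_j)_{j\in[n]}$ with $\alpha$-distortion below $\varepsilon$, I would split $M(\beta)(A)$, for an arbitrary Borel $A\subset[0,\infty)$, into the contribution of matched blocks $U_j$ with $\IPLT_\beta(U_j)\in A$ and that of unmatched blocks: condition (iii) forces $\IPLT_\gamma(V_j)\in A^\varepsilon$ whenever $U_j$ contributes, bounding this part by $M(\gamma)(A^\varepsilon)+\sum_j|\Leb(U_j)-\Leb(V_j)|$, while condition (i) bounds the contribution from unmatched blocks by $\IPmag{\beta}-\sum_j\Leb(U_j)$. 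Summing yields $M(\beta)(A)\le M(\gamma)(A^\varepsilon)+\varepsilon$, and the symmetric estimate follows from (ii). Combining the three bounds gives $d_{\mathrm{GHP}}(T(\beta),T(\gamma))\le\dI(\beta,\gamma)$, hence continuity. The one delicate point is to verify that the Hausdorff and Prokhorov estimates hold simultaneously with respect to the \emph{same} ambient embedding, but since $T(\beta)$ and $T(\gamma)$ are linear segments both rooted at $0$ this is automatic, and no genuinely new estimate beyond those already used in Theorem \ref{thmcont}(a)--(b) is required.
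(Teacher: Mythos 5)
Your proof is correct and follows essentially the same route as the paper, which derives the corollary by combining Theorem \ref{thmcont}(a) and (b) with the correspondence/embedding characterisation of the Gromov--Hausdorff--Prokhorov metric from \cite[Proposition 6]{M09}; your common embedding of both segments into $[0,\infty)$ with root $0$ is just an explicit instance of that characterisation. The Prokhorov estimate you spell out is exactly the one the paper sketches after Theorem \ref{thmcont}, so no new ingredient is needed (only the advertised Lipschitz constant might change with the precise normalisation of $d_{\mathrm{GHP}}$, which does not affect continuity).
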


This entails, in particular, that for $\beta(t)$ evolving $\dI$-continuously in $\IPspace$, the associated evolution $T(\beta(t))$ in $\mathbb{T}$
is Gromov--Hausdorff--Prokhorov-continuous. This result when suitably iterated by replacing atoms by further branches (cf.\ the bead-splitting 
constructions of \cite{PitmWink09}) is a key step in our construction of the Aldous diffusion \cite{Paper4} as a $\mathbb{T}$-valued diffusion that
has Aldous's Brownian Continuum Random Tree \cite{AldousCRT1} as its stationary distribution. 

Further key steps towards this goal are certain $\IPspace$-valued diffusions \cite{PartA,PartB,Part2}, which are of independent interest and 
are related to Petrov's \cite{Petrov09} diffusions on spaces of decreasing sequences by a projection via $W$ onto the ranked sequence of  
block masses. In connection with Theorem \ref{thmcont}(b) this entails continuously evolving diversity processes for Petrov's diffusions, which does
not appear to follow from previous constructions \cite{FengWang07,Petrov09,RuggWalk09,Eth14,CdBERS16}. Indeed, other processes have been constructed
by directly modelling a continuously evolving diversity process \cite{RuggWalkFava13}.

\section{Proofs of Theorems \ref{thmdHprime} and \ref{thmdI}}\label{sectproofs}

For the ease of the reader, we will restate all parts of the theorems as propositions/corollaries.

\begin{proposition}\label{prop:IP_metric} 
 The maps $d_H^\prime\colon\HIPspace^2\rightarrow[0,\infty)$ and $\dI\colon \IPspace^2\to [0,\infty)$ are metrics.
\end{proposition}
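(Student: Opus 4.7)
My proof plan is to verify the four metric axioms for both $d_H^\prime$ and $\dI$ in parallel. Non-negativity is immediate from the definition, and symmetry follows from reversing a correspondence: if $(U_j,V_j)_{j\in[n]}$ is a correspondence between $\beta$ and $\gamma$, then $(V_j,U_j)_{j\in[n]}$ is a correspondence between $\gamma$ and $\beta$ with the same distortion (each of (i)--(iv) is symmetric in the two partitions). To see $d_H^\prime(\beta,\beta)=0$ and $\dI(\beta,\beta)=0$, I would exhaust the blocks of $\beta$ in their left-to-right order: if $\beta$ is finite take the full list, and if it is infinite take finite truncations $(U_j,U_j)_{j\in[n]}$ containing all blocks of length $\geq 1/n$. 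Then (iii) and (iv) vanish identically, while the total mass of omitted blocks in (i) and (ii) tends to $0$ since $\sum_{U\in\beta}\Leb(U)=\IPmag\beta$.

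For the triangle inequality, the plan is to compose correspondences. Given $\varepsilon_1$-distortion correspondences $C_1=(U_j^1,V_j^1)_{j\in[n_1]}$ between $\beta,\gamma$ and $C_2=(V_k^2,W_k^2)_{k\in[n_2]}$ between $\gamma,\delta$, let $I\subseteq[n_1]\times[n_2]$ consist of those $(j,k)$ with $V_j^1=V_k^2$, and form the composite correspondence $\bigl(U_j^1,W_k^2\bigr)_{(j,k)\in I}$. This is order-preserving in both coordinates because the orderings induced on $I$ by $j$ and by $k$ must agree (the $V$-blocks are the same). Condition (iii) is immediate from the triangle inequality applied to $|\IPLT_\beta(U_j^1)-\IPLT_\delta(W_k^2)|$, with $\IPLT_\gamma(V_j^1)=\IPLT_\gamma(V_k^2)$ as the waypoint, and condition (iv) is similarly immediate. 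The main bookkeeping is in (i) and (ii): writing $D_1=\sum_j|\Leb(U_j^1)-\Leb(V_j^1)|$ and $D_2=\sum_k|\Leb(V_k^2)-\Leb(W_k^2)|$, one decomposes $\IPmag\beta-\sum_{(j,k)\in I}\Leb(U_j^1)$ into the $C_1$-unmatched mass $\IPmag\beta-\sum_j\Leb(U_j^1)$ plus $\sum_{j\notin I_1}\Leb(U_j^1)$, and estimates the latter by $D_1+\sum_{j\notin I_1}\Leb(V_j^1)$. Since the $V_j^1$ for $j\notin I_1$ are blocks of $\gamma$ disjoint from the $\{V_k^2\}_{k\in[n_2]}$, this last sum is bounded by $\IPmag\gamma-\sum_k\Leb(V_k^2)$. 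Combined with $D_I\leq D_1+D_2$ (triangle inequality on each pair and using $V_j^1=V_k^2$), the full bound on (i) is $\varepsilon_1+\varepsilon_2$ via the quantities (A) and (C) from the two given correspondences; (ii) is analogous.

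The one axiom requiring real work is identity of indiscernibles, and this will be the main obstacle. Suppose $d_H^\prime(\beta,\gamma)=0$; I claim $\beta=\gamma$. Fix $U\in\beta$ with $\Leb(U)=m>0$ and let $(a_U,b_U)=U$. For every $\varepsilon\in(0,m/2)$ there is a correspondence of distortion $<\varepsilon$; condition (i) forces the unmatched mass in $\beta$ to be $<\varepsilon<m$, so $U$ must appear as some $U_j^1$, paired with a $V_\varepsilon\in\gamma$ satisfying $|\Leb(V_\varepsilon)-m|<\varepsilon$. Because $\beta$ and $\gamma$ have Lebesgue-null complements, the left endpoint of any block is the prefix sum of earlier block masses; combining condition (i) for $\beta$ (to estimate $a_U$ by $\sum_{i<j}\Leb(U_i^1)$ up to error $\varepsilon$), $D_1<\varepsilon$ (to swap to $\sum_{i<j}\Leb(V_i^1)$), and condition (ii) for $\gamma$ (to pass to the true left endpoint of $V_\varepsilon$) yields $|a_{V_\varepsilon}-a_U|<3\varepsilon$. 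Now the set of blocks of $\gamma$ with mass greater than $m/2$ and left endpoint within $m/100$ of $a_U$ is finite (by summability of $\sum_{V\in\gamma}\Leb(V)$), so by a pigeonhole argument on a sequence $\varepsilon_n\downarrow 0$, some single block $V^\ast$ equals $V_{\varepsilon_n}$ for infinitely many $n$, giving $\Leb(V^\ast)=m$ and $a_{V^\ast}=a_U$, hence $V^\ast=U$. Reversing the roles of $\beta,\gamma$ shows $\beta=\gamma$. The identical argument, using only (i) and (ii), proves the corresponding statement for $\dI$ on $\IPspace$, since those conditions alone are already the Hausdorff distortion.
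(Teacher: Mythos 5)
Your proposal is correct and the triangle-inequality argument --- composing the two correspondences along the blocks of the middle partition that they share, and charging the mass of the non-shared $V$-blocks to the unmatched mass of the middle partition --- is essentially identical to the paper's proof. You additionally supply a sound argument for positive-definiteness (locating each block of $\beta$ inside $\gamma$ via its mass and prefix-sum left endpoint, then pigeonholing over $\varepsilon_n\downarrow 0$), which the paper leaves as an exercise for the reader.
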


\begin{proof}
 Symmetry is built into the definition, and we leave positive-definiteness as an exercise for the reader. We will prove that $\dI$ satisfies the 
 triangle inequality. The reader will then easily simplify this proof to obtain the triangular inequality for $d_H^\prime$.
 
 Suppose that $\dI(\eta,\beta) = a$ and $\dI(\beta,\gamma) = b$. Then
 \begin{equation}\label{eq:divbound}
  |\IPLT_{\eta}(\infty)-\IPLT_{\gamma}(\infty)| \leq |\IPLT_{\eta}(\infty)-\IPLT_{\beta}(\infty)| + |\IPLT_{\beta}(\infty)-\IPLT_{\gamma}(\infty)| \leq a+b.
 \end{equation}
 Now take $\e > 0$. It suffices to show that $\dI(\eta,\gamma) \leq a+b+2\e$.
 
 There exist correspondences $(U_j,V_j)_{j\in[m]}$ and $(W_j,X_j)_{j\in[n]}$, from $\eta$ to $\beta$ and from $\beta$ to $\gamma$ respectively, with distortions less than $a+\e$ and $b+\e$ respectively. We will split these two sequences into two parts each. Let $(\hat V_j)_{j\in[k]} = (\hat W_j)_{j\in[k]}$ denote the subsequence of intervals that appear in both $(V_j)_{j\in[m]}$ and $(W_j)_{j\in[n]}$; note that $k$ may equal zero, i.e.\ the overlap may be empty. For each $j\in[k]$, let $\hat U_j$ and $\hat X_j$ denote the intervals in $\eta$ and $\gamma$ respectively that are paired with $\hat V_j = \hat W_j$ in the two correspondences. Then, let $(\hat U_j,\hat V_j)_{j\in [m]\setminus[k]}$ denote the remaining terms in the first correspondence not accounted for in the intersection, and let $(\hat W_j,\hat X_j)_{j\in [n]\setminus[k]}$ denote the remaining terms in the second correspondence. So overall, the sequences $(\hat U_j,\hat V_j)_{j\in[m]}$ and $(\hat V_j,\hat W_j)_{j\in[n]}$ are reorderings of the two correspondences.
 
 We will show that the correspondence $(\hat U_j,\hat X_j)_{j\in[k]}$ has distortion less than $a+b+2\e$. There are four quantities, listed in Definition \ref{def:IP:metric}, that we must bound. Quantity (iv) has already been bounded in (\ref{eq:divbound}). To bound (iii), observe that
 $$
  \sup_{j\in[k]}|(\IPLT_{\eta}(\hat U_j) - \IPLT_{\gamma}(\hat X_j)| \leq \sup_{j\in[k]}\left(|\IPLT_{\eta}(\hat U_j) - \IPLT_{\beta}(\hat V_j)| + |\IPLT_{\beta}(\hat W_j) - \IPLT_{\gamma}(\hat X_j)|\right)
  	< a+b+2\e.
 $$
 We now go about bounding (i), which is more involved. By the triangle inequality,
 \begin{align*}
  &\sum_{j\in[k]}|\Leb(\hat U_j) - \Leb(\hat X_j)| + \IPmag{\eta} - \sum_{j\in[k]}\Leb(\hat U_j)\\
  	&\leq \sum_{j\in[m]}|\Leb(U_j) - \Leb(V_j)| + \sum_{j\in[n]}|\Leb(W_j) - \Leb(X_j)|\\
  	&\quad - \sum_{j\in[m]\setminus[k]}|\Leb(\hat U_j) - \Leb(\hat V_j)|
  	 + \left(\IPmag{\eta} - \sum_{j\in[m]}\Leb(\hat U_j)\right) + \sum_{j\in[m]\setminus[k]}\Leb(\hat U_j).
 \end{align*}
 Since the $(\hat V_j)_{j\in[m]\setminus[k]}$ are members of $\beta$ not listed in $(W_j)_{j\in[n]}$,
 $$
  \sum_{j=k+1}^m\Leb(\hat U_j) - \sum_{j=k+1}^m|\Leb(\hat U_j) - \Leb(\hat V_j)| \leq \sum_{j=k+1}^m\Leb(\hat V_j)
  	\leq \IPmag{\beta} - \sum_{j=1}^n\Leb(W_j),
 $$
 again by the triangle inequality. Thus,
 \begin{align*}
  &\sum_{j\in[k]}|\Leb(\hat U_j) - \Leb(\hat X_j)| + \IPmag{\eta} - \sum_{j\in[k]}\Leb(\hat U_j)\\
  	&\leq \sum_{j\in[m]}\!|\Leb(U_j)\! -\! \Leb(V_j)|\! +\! \IPmag{\eta}\! -\! \sum_{j\in[m]}\Leb(U_j)
  	\!+\! \sum_{j\in[n]}\!|\Leb(W_j)\! -\! \Leb(X_j)|\! +\! \IPmag{\beta}\! -\! \sum_{j\in[n]}\Leb(W_j)\\
  	&\leq a+\e + b+\e.
 \end{align*}
 This is the desired bound on quantity (i) in Definition \ref{def:IP:metric}. The same argument bounds (ii):
 $$
  \sum_{j\in[k]}|\Leb(\hat U_j) - \Leb(\hat X_j)| + \IPmag{\gamma} - \sum_{j\in[k]}\Leb(\hat X_j) \leq a+b+2\e.
 $$
 Therefore $\dI(\eta,\gamma) \leq a+b+2\e$, as desired.
\end{proof}

Note also that $d_H$ is a metric, as it is the pullback of the Hausdorff metric on compact subsets of $[0,\infty)$ under the map
$\beta\mapsto C_\beta=[0,\IPmag{\beta}]\setminus\bigcup_{U\in\beta}U$. %It follows from the proof of Proposition \ref{prop:IP_metric} that $d_H'$ is a metric as well.
%This is a metric, as it is the pullback via $G$ of the Hausdorff metric on compact subsets of $\BR$.

\begin{proposition}
 $(\HIPspace,d_H)$ is Lusin. Furthermore, $\HIPspace$ is not closed, but a Borel subset of the locally compact space $(\mathcal{C},d_H)$ of (collections of disjoint open intervals that form the complements of) compact subsets of $[0,\infty)$.
\end{proposition}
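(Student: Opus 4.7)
The plan is to realise $\HIPspace$ as a Borel subset of the space $\cC$ of nonempty compact subsets of $[0,\infty)$ equipped with the Hausdorff metric, and then verify the four claims by embedding $\cC$ into a compact metric space.

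First I would prove local compactness of $(\cC,d_H)$ using the Blaschke selection theorem: any closed $d_H$-ball around $C\in\cC$ with $C\subseteq[0,R]$ is contained in the set of nonempty compact subsets of $[0,R+1]$, which is $d_H$-compact. Separability is routine via finite subsets of $\mathbb{Q}\cap[0,\infty)$.

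Next I would embed $\cC$ into the space $\cK$ of nonempty compact subsets of the one-point compactification $[0,\infty]$, where $[0,\infty]$ is equipped with the metric pulled back from $[0,1]$ via a fixed homeomorphism $\phi$. Then $\cK$ with the resulting Hausdorff metric is compact, again by Blaschke. The inclusion identifies $\cC$ with $\{C\in\cK:\infty\notin C\}$, which is open in $\cK$. The topologies induced on $\cC$ by $d_H$ and by the pulled-back metric agree: the two underlying metrics on $[0,\infty)$ are bi-Lipschitz equivalent on any bounded interval, each individual $C\in\cC$ is bounded, and any $d_H$-convergent sequence is eventually confined to a bounded region. Hence $(\cC,d_H)$ is homeomorphic to an open subset of a compact metric space, so it is Lusin.

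Third I would identify $\HIPspace\subseteq\cC$ with $\{C\in\cC:0\in C,\,\Leb(C)=0\}$ via the map $\beta\mapsto C_\beta=[0,\IPmag{\beta}]\setminus\bigcup_{U\in\beta}U$. The condition $0\in C$ is $d_H$-closed. For the Lebesgue condition, $C\mapsto\Leb(C)$ is upper semicontinuous on $\cC$: if $C_n\to C$ in $d_H$ then for each $k\ge 1$ eventually $C_n\subseteq C^{1/k}$, so $\limsup_n\Leb(C_n)\le\Leb(C^{1/k})\downto\Leb(C)$ as $k\to\infty$. Thus $\{C:\Leb(C)<1/k\}$ is open and $\{C:\Leb(C)=0\}=\bigcap_{k\ge 1}\{C:\Leb(C)<1/k\}$ is $G_\delta$. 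Therefore $\HIPspace$ is Borel in $\cC$ and inherits the Lusin property. Non-closedness is witnessed by $C_n=\{k/n:0\le k\le n\}\in\HIPspace$, which $d_H$-converges to $[0,1]$; the limit has Lebesgue measure $1$ and so lies outside $\HIPspace$.

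The step I expect to require the most care is the comparison of the two Hausdorff topologies on $\cC$: globally $\cC$ is not uniformly bounded, so a single bi-Lipschitz comparison between the Euclidean metric on $[0,\infty)$ and the pulled-back metric does not apply on all of $\cC$ at once; one must instead argue pointwise (at each $C$, on sets confined to a bounded neighbourhood) and use that $d_H$-convergence automatically confines tails to a bounded region.
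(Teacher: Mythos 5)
Your proof is correct and follows essentially the same route as the paper: identify $\HIPspace$ with complements inside the hyperspace $\cC$ of compact subsets of $[0,\infty)$, obtain local compactness from compactness of the hyperspace of a bounded interval, and exhibit an explicit sequence witnessing non-closedness. The only substantive step the paper leaves to the reader --- that $\HIPspace$ is Borel in $\cC$ --- you supply correctly via the closedness of $\{C\colon 0\in C\}$ and the upper semicontinuity of $C\mapsto\Leb(C)$, which in fact exhibits $\HIPspace$ as a $G_\delta$ subset of $\cC$, slightly more than is claimed.
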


\begin{proof}
 By \cite[Theorem 7.3.8]{BuraBuraIvan01}, the subspace of compact subsets of $[0,L]$ is compact, hence $(\mathcal{C},d_H)$ is locally compact. $\HIPspace$ is not closed since $\{(j-1)2^{-n},j2^{-n}),j\in[2^n]\}\cup\{(1,2)\}$, $n\ge 1$, is a sequence in $(\HIPspace,d_H)$, even in $(\IPspace,d_H)$, but with Hausdorff limit $\{(1,2)\}$ in $\mathcal{C}\setminus\HIPspace$. The proof that $\HIPspace\subset\mathcal{C}$ is a Borel subset is left to the reader.
\end{proof}

%\section{Topology of $(\IPspace,\dI)$}
%\label{sec:IP_space}

There are some natural operations for interval partitions:

%\begin{notation}\label{nota:IP:order}
% For an interval partition $\eta$ and $U,V\in\eta$ with $U\neq V$, we write $U < V$ if $x<y$ for some (or equivalently, for all) $(x,y)\in U\times V$.
%\end{notation}

\begin{definition}\label{def:IP:concat}%\label{def:IP:scaling}\label{def:IP:subtraction}
 We define a scaling map $\scaleI\colon(0,\infty)\times\HIPspace\to\HIPspace$ by setting, for $c>0$ and $\beta\in\HIPspace$
 $$\scaleI[c][\beta]=\{(ca,cb)\colon(a,b)\in\beta\}.$$
% We define a \emph{reversal involution} $\reverse_{\textnormal{IP}}\colon\HIPspace\rightarrow\HIPspace$ and a \emph{scaling map} $\scaleI\colon (0,\infty)\times\HIPspace\rightarrow\HIPspace$ by saying, for $c>0$ and $\beta\in\HIPspace$,
% \begin{equation}
%  \reverse_{\textnormal{IP}}(\beta)=\left\{\left(\IPmag{\beta}-b,\,\IPmag{\beta}-a\right),\,(a,b)\in\beta\right\}, \qquad \scaleI[c][\beta] = \{(ca,cb)\colon (a,b)\in \beta\}.\label{eq:IP:scale_def}
% \end{equation}
 Let $(\beta_a)_{a\in\mathcal{A}}$ denote a family of interval partitions indexed by a totally ordered set $(\cA,\preceq)$. For the purpose of this definition, let 
% $S_{\beta}(a) := \sum_{b\preceq a}\IPmag{\beta_b}$ and 
 $S_{\beta}(a-) := \sum_{b\prec a}\IPmag{\beta_b}$ for $a\in\mathcal{A}$.
 If $S_{\beta}(a-) < \infty$ for every $a\in \mathcal{A}$, then we define the \emph{concatenation}
 \begin{equation}\label{eq:IP:concat_def}
  \Concat_{a\in\mathcal{A}}\beta_a := \{(x+S_{\beta}(a-),y+S_{\beta}(a-)\colon\ a\in\mathcal{A},\ (x,y)\in \beta_a\}.
 \end{equation}
 When $\mathcal{A}=\{a_1,a_2\}$, we denote this by $\beta_{a_1}\concat\beta_{a_2}$. 
 We call $(\beta_a)_{a\in\mathcal{A}}$ \emph{summable} if $\sum_{a\in\mathcal{A}}\IPmag{\beta_a} < \infty$. 
 It is then \emph{strongly summable} if the concatenated partition satisfies the diversity property \eqref{eq:IPLT}. 
 %When $\mathcal{A}=\{a_1,a_2\}$, we simply write $\beta_{a_1}\concat\beta_{a_2}$ for the concatenation of $\beta_{a_1}$ and $\beta_{a_2}$.
% 
% Consider an interval partition $\eta\in \IPspace$ and a subset $A\subseteq\eta$ such that $A$ satisfies the diversity property \eqref{eq:IPLT}. Note that $A$ need not be an interval partition, since its complement need not have Lebesgue measure zero in any particular interval. For the purpose of the following, let 
%  $m_A(x) := \sum_{(a,b)\in A}(b-a)\cf\{b \leq x\}$, $x\geq 0$. We define the \emph{difference}
% \begin{equation}
%  \eta - A := \left\{\left(a - m_A(a),\ b - m_A(a)\right)\colon\ (a,b)\in \eta\setminus A\right\}.
% \end{equation}
% This operation removes some of the intervals from $\eta$ and shifts the remaining intervals to the left to close the gaps vacated by the deleted intervals.
\end{definition}

It will be useful to separate the diversity of a partition from most of its mass in the following sense. %, for some $\epsilon>0$ splitting $\eta$ into $\eta^{\IPLT}_\epsilon$ and $\eta^L_\epsilon$.
For $\eta\in\IPspace$ and $\epsilon>0$, let
\begin{equation}\label{eq:IP:mass_cutoff_for_sep}
 \delta(\eta,\epsilon) := \sup\left\{m>0\colon\ \sum_{U\in\eta}\cf\big\{\Leb(U) < m\big\}\Leb(U) < \epsilon \right\}
\end{equation}
For the purpose of the following, let $A := \{U\in\eta\colon\ \Leb(U) \geq \delta(\eta,\epsilon)\big\}$,
$$S_A(x) := \sum_{(a,b)\in A}(b-a)\cf\{b \leq x\}, \quad \text{and} \quad S_{\eta\setminus A}(x) := \sum_{(a,b)\in\eta\setminus A}(b-a)\cf\{b \leq x\}\quad \text{for }x\ge0.$$
%Consider an interval partition $\eta\in \IPspace$ and a subset $A\subseteq\eta$. 
%For the purpose of the following, let 
% $m_A(x) := \sum_{(a,b)\in A}(b-a)\cf\{b \leq x\}$, $x\geq 0$. We define the \emph{difference}
We define
 \begin{equation}\label{eq:IP:separate_LT_mass}
 \begin{split}
   \eta^{\IPLT}_{\epsilon} &:= \big\{ (a-S_A(a),b-S_A(a))\colon (a,b)\in\eta\setminus A\big\}\\
   \text{and}\quad
   \eta^{L}_{\epsilon} &:= \big\{ (a-S_{\eta\setminus A}(a),b-S_{\eta\setminus A}(a))\colon (a,b)\in A\big\}.
 \end{split}
 \end{equation}
%   \eta - \big\{ U\in\eta\colon\ \Leb(U) \geq \delta(\eta,\epsilon)\big\}%\\[3pt]
%   \eta^{L}_{\epsilon}:=\eta - \big\{ U\in\eta\colon\ \Leb(U) < \delta(\eta,\epsilon)\big\},
Effectively, we form $\eta^L_{\epsilon}$ by taking the large blocks of $\eta$ and sliding them down to sit next to each other, and correspondingly for $\eta^{\IPLT}_{\epsilon}$ with the small blocks. These partitions have the properties
$$
 \IPLT_{\eta^{\IPLT}_{\epsilon}}(\infty) = \IPLT_{\eta}(\infty),\qquad \IPmag{\eta^{\IPLT}_{\epsilon}} \leq \epsilon,\qquad
 \IPLT_{\eta^{L}_{\epsilon}}(\infty) = 0,\qquad \IPmag{\eta^{L}_{\epsilon}} \geq \IPmag{\eta} - \epsilon.
$$

We note the following easy lemma.

\begin{lemma}\label{lem:IP:scale}
 For $c>0$, the scaling functions $\beta\mapsto\scaleI[c][\beta]$ are bijections on $\IPspace$ and on $\HIPspace$; specifically, partitions in the image of $\IPspace$ 
 possess the diversity property with
 \begin{equation}
  \IPLT_{\scaleI[c][\beta]}(ct) = c^\alpha\IPLT_{\beta}(t) \qquad \text{for }\beta\in\IPspace,\ t>0,\ c>0.\label{eq:IP:LT_scaling}
 \end{equation}
 Moreover, for $\beta,\gamma\in\IPspace$,
 \begin{gather}
  \dH'(\beta,\scaleI[c][\beta]) = \left|c-1\right|\IPmag{\beta}, \qquad \dH'(c\scaleI\beta,c\scaleI\gamma) = c\dH'(\beta,\gamma),\label{eq:IP:Haus_scale}\\
  \dI(\beta,\scaleI[c][\beta]) \leq \max\left\{\left|c^\alpha - 1\right|\IPLT_{\beta}(\infty), \left|c-1\right|\IPmag{\beta}\right\},\label{eq:IP:scaling_dist_1}\\
  \text{and}\quad\min\{c,c^\alpha\}\dI(\beta,\gamma) \leq \dI(\scaleI[c][\beta],\scaleI[c][\gamma]) \leq \max\{c,c^\alpha\}\dI(\beta,\gamma).\label{eq:IP:scaling_dist_2}
 \end{gather}\pagebreak[2]
\end{lemma}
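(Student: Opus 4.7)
The plan is to address each claim using the natural block-to-block correspondence that pairs $(a,b)\in\beta$ with $(ca,cb)\in\scaleI[c][\beta]$ together with its two-partition analogue.

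For the bijection claim the inverse of $\scaleI[c]\cdot$ is $\scaleI[1/c]\cdot$, immediate from Definition 2.4. For the diversity scaling identity I substitute $h' = h/c$ in the defining limit \eqref{eq:IPLT}: a block $(ca,cb)\in\scaleI[c][\beta]$ with $cb\le ct$ has length exceeding $h$ iff $b-a>h/c$ and $b\le t$, so
\[
h^\alpha \#\{(ca,cb)\in\scaleI[c][\beta]\colon cb-ca>h,\ cb\le ct\}
= c^\alpha \cdot (h/c)^\alpha \#\{(a,b)\in\beta\colon b-a>h/c,\ b\le t\},
\]
and letting $h\downto 0$ gives both existence of the limit and the identity \eqref{eq:IP:LT_scaling}; in particular $\scaleI[c][\beta]\in\IPspace$ whenever $\beta\in\IPspace$.

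For \eqref{eq:IP:Haus_scale}, the correspondence pairing the $n$ blocks of $\beta$ of largest mass with their rescaled counterparts has distortion quantities (i), (ii) equal to $|c-1|M + \IPmag\beta - M$ and $|c-1|M + c(\IPmag\beta - M)$ respectively, where $M:=\sum_j\Leb(U_j)$. Sending $M\upto\IPmag\beta$ (possible by summability of block masses) drives both quantities to $|c-1|\IPmag\beta$, yielding the upper bound. For the matching lower bound, I check that over the admissible range $M\in[0,\IPmag\beta]$ the maximum of these two linear functions of $M$ is minimized at the endpoint $M=\IPmag\beta$, with a short case split on whether $c\ge1$ or $c<1$. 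The identity $\dH'(\scaleI[c][\beta],\scaleI[c][\gamma]) = c\dH'(\beta,\gamma)$ is cleaner: the map $(U_j,V_j)\mapsto(cU_j,cV_j)$ is a bijection on correspondences (inverted by $\scaleI[1/c]$), and both Hausdorff distortion quantities scale homogeneously by $c$.

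For \eqref{eq:IP:scaling_dist_1} I again take the block-to-block correspondence: quantities (i), (ii) are bounded by $|c-1|\IPmag\beta$ as above, quantity (iii) reads $|1 - c^\alpha|\IPLT_\beta(b) \le |c^\alpha - 1|\IPLT_\beta(\infty)$ by the diversity scaling, and (iv) is exactly $|c^\alpha - 1|\IPLT_\beta(\infty)$. Taking the maximum of the four gives the stated bound. Finally, for \eqref{eq:IP:scaling_dist_2} the correspondence bijection from the previous step shows that the mass quantities (i), (ii) scale by $c$ while the diversity quantities (iii), (iv) scale by $c^\alpha$; the maximum of the four therefore scales by a factor between $\min\{c,c^\alpha\}$ and $\max\{c,c^\alpha\}$, and taking infima over correspondences delivers both inequalities. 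The only step requiring genuine care is the lower bound in \eqref{eq:IP:Haus_scale}, whose sharpness relies on the constraint $M\le\IPmag\beta$; the remaining bounds all follow by direct substitution into Definition 2.2.
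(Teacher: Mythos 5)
The paper records this as an ``easy lemma'' and gives no proof, so there is nothing to compare against; your block-to-block correspondence $(U,cU)$ is the natural route, and your treatment of the bijection claim, the diversity scaling \eqref{eq:IP:LT_scaling}, the upper bounds in \eqref{eq:IP:Haus_scale} and \eqref{eq:IP:scaling_dist_1}, and both inequalities in \eqref{eq:IP:scaling_dist_2} (via the bijection $(U_j,V_j)\mapsto(cU_j,cV_j)$ on the set of \emph{all} correspondences) is correct.

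The one step that does not go through as written is the lower bound $\dH'(\beta,\scaleI[c][\beta])\geq|c-1|\IPmag{\beta}$. You minimize $\max\{\mathrm{(i)},\mathrm{(ii)}\}$ over $M\in[0,\IPmag{\beta}]$, but that only controls correspondences of the diagonal form $(U_j,cU_j)$; the infimum in Definition \ref{def:IP:metric} runs over all correspondences, including those pairing $U_j$ with $cV_j$ for $V_j\neq U_j$, whose distortion is not a function of a single parameter $M$. The statement survives, and the repair is short: for an arbitrary correspondence $(U_j,cV_j)_{j\in[n]}$ put $A=\sum_j\Leb(U_j)$, $B=\sum_j\Leb(V_j)$, $L=\IPmag{\beta}$, and note $\sum_j|\Leb(U_j)-c\Leb(V_j)|\geq|A-cB|$. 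If $c\geq1$, quantity (ii) is at least $(cB-A)+cL-cB=cL-A=(c-1)L+(L-A)\geq(c-1)L$; if $c<1$, quantity (i) is at least $(A-cB)+L-A=L-cB=(1-c)L+c(L-B)\geq(1-c)L$. Either way the Hausdorff distortion of every correspondence is at least $|c-1|L$, which combined with your upper bound gives the claimed equality. (A minor stylistic point: in \eqref{eq:IP:scaling_dist_1} your quantities (i) and (ii) are not bounded by $|c-1|\IPmag{\beta}$ for fixed $n$ --- they only tend to it as $M\uparrow\IPmag{\beta}$ --- so the bound is obtained in the limit over the sequence of correspondences, as in your \eqref{eq:IP:Haus_scale} argument; worth saying explicitly.)
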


\begin{proposition}[$d_H$ is equivalent to $d_H'$ and weaker than $\dI$]\label{prop:Hausdorff}
 \begin{enumerate}[label=(\roman*), ref=(\roman*)]
  \item[\rm(i)] For every $\epsilon > 0$ there exist some $\beta,\gamma\in\IPspace$ for which $d_H(\beta,\gamma) < \epsilon$ and $\dI(\beta,\gamma) > 1/\epsilon$.%\label{item:Haus:neq}
 % \item For every $\beta,\gamma\in\IPspace$, we have $d_{H}(\beta,\gamma) \leq 3\dI(\beta,\gamma)$.
  %\item For every $\beta,\gamma\in\HIPspace$, we have $d_{H}(\beta,\gamma) \leq 3d_H'(\beta,\gamma)$.
  \item[\rm(ii)] For $\beta,\gamma\in\IPspace$, we have $d_H'(\beta,\gamma)\leq \dI(\beta,\gamma)$.%\label{item:Haus:leq}
  \item[\rm(iii)] The metrics $d_H$ and $d_H'$ generate the same topology on $\HIPspace$.%\label{item:Haus:equiv}
 \end{enumerate}
\end{proposition}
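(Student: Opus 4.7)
For part~(i), I would scale up any fixed $\beta_0\in\IPspace$ with $\IPLT_{\beta_0}(\infty)>0$; concretely take $\beta_0=\{(\sum_{j<k}j^{-1/\alpha},\sum_{j\leq k}j^{-1/\alpha})\colon k\geq 1\}$, for which one checks $\IPLT_{\beta_0}(\infty)=\Gamma(1-\alpha)$ (with $\IPLT_{\beta_0}(t)=0$ for $t<\IPmag{\beta_0}$) so that $\beta_0\in\IPspace$. By Lemma~\ref{lem:IP:scale} the scaled partition $\beta:=\scaleI[c][\beta_0]$ has $\IPLT_\beta(\infty)=c^\alpha\IPLT_{\beta_0}(\infty)$, exceeding $1/\epsilon$ for $c$ chosen large. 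Let $\gamma$ be obtained from $\beta$ by keeping only its $n$ longest blocks (positioned to form a partition of $[0,\IPmag{\gamma}]$); as remarked after Theorem~\ref{thmcont}, $d_H(\beta,\gamma)\to 0$ as $n\to\infty$ while $\IPLT_\gamma(\infty)=0$ because $\gamma$ has only finitely many blocks. For $n$ large this gives $d_H(\beta,\gamma)<\epsilon$, and quantity~(iv) of $\dis_\alpha$ forces $\dI(\beta,\gamma)\geq\IPLT_\beta(\infty)>1/\epsilon$. Part~(ii) is immediate: $\dis_H$ is the maximum of~(i)--(ii) while $\dis_\alpha$ is the maximum of~(i)--(iv), so $\dis_H\leq\dis_\alpha$ pointwise over correspondences, and taking infima gives $d_H'\leq\dI$.

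For part~(iii), I would prove both topological implications separately. For $d_H'\to 0\Rightarrow d_H\to 0$ I would establish the Lipschitz bound $d_H\leq 4d_H'$. Given a correspondence $(U_j,V_j)_{j\in[n]}$ with $\dis_H<\epsilon$, write $U_j=(a_j,b_j)$ and $V_j=(c_j,d_j)$; the fact that $a_j$ is the total mass of $\beta$-blocks to the left of $U_j$ (which differs from $\sum_{i<j}\Leb(U_i)$ by at most the total unmatched mass $<\epsilon$) combined with $\sum|\Leb(U_i)-\Leb(V_i)|<\epsilon$ yields $|a_j-c_j|,|b_j-d_j|\leq 3\epsilon$. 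Endpoints of unmatched $\beta$-blocks lie in consecutive-matched-block gaps of total length less than $\epsilon$, so sit within $O(\epsilon)$ of a matched endpoint and hence of $C_\gamma$; and $|\IPmag{\beta}-\IPmag{\gamma}|\leq 2\epsilon$ handles the rightmost endpoint. The symmetric argument controls $\sup_{y\in C_\gamma}\inf_{x\in C_\beta}|x-y|$.

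For the converse $d_H\to 0\Rightarrow d_H'\to 0$, given $\epsilon>0$ I would first choose $\delta>0$ small enough that the blocks of $\beta$ with mass less than $\delta$ total less than $\epsilon/4$; this is possible because $\sum_{U\in\beta}\Leb(U)=\IPmag{\beta}<\infty$. Let $U_1,\ldots,U_k$ be the remaining big blocks of $\beta$ (each of mass $\geq\delta$), and set $\eta:=d_H(\beta_n,\beta)$; I work with $n$ large enough that $\eta<\delta/3$. The key geometric observation is that the interior $(a_j+\eta,b_j-\eta)$ of each big block $U_j=(a_j,b_j)$ is disjoint from $C_{\beta_n}$: any such point is at distance $>\eta$ from $C_\beta$, contradicting $C_{\beta_n}\subseteq C_\beta^\eta$. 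Hence this interior lies in a unique block $V_j^n=(a_j^n,b_j^n)\in\beta_n$, and applying $C_\beta\subseteq C_{\beta_n}^\eta$ at $a_j$ and $b_j$ forces $V_j^n$ to stop within $\eta$ of these endpoints, giving $|a_j-a_j^n|,|b_j-b_j^n|\leq\eta$. When two big blocks $U_j,U_{j+1}$ share an endpoint $b_j=a_{j+1}$, I separate the corresponding $V_j^n,V_{j+1}^n$ using a point of $C_{\beta_n}\cap[b_j-\eta,b_j+\eta]$, whose existence follows from $C_\beta\subseteq C_{\beta_n}^\eta$. Combining these, the correspondence $(U_j,V_j^n)_{j\in[k]}$ has Hausdorff distortion at most $\epsilon/4+(4k+1)\eta$ (using $|\IPmag{\beta_n}-\IPmag{\beta}|\leq\eta$ to bound the unmatched mass in $\beta_n$), which falls below $\epsilon$ for $n$ large. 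The principal technical obstacle is this adjacency case, where the interior construction collapses and one needs the separating points from $C_{\beta_n}$ provided by the Hausdorff closeness.
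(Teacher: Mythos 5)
Your proposal is correct and follows essentially the same route as the paper's: part (i) via truncation to finitely many large blocks (the paper uses $\eta^L_{\epsilon/2}$), part (ii) by comparing the two distortions termwise, and part (iii) by a Lipschitz bound $d_H\leq C\,d_H'$ in one direction and, in the other, by matching each sufficiently large block of $\beta$ to the block of $\gamma$ containing a designated interior point (the paper uses midpoints, you use the $\eta$-shrunken interior). Your explicit treatment of adjacent large blocks sharing an endpoint, using a point of $C_{\gamma}$ near that shared endpoint to keep the matched blocks distinct, is a detail the paper's proof leaves implicit, but it is the same argument in substance.
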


The related claim that each of $\dI$, $d_H$, and $d_H'$ generates the same Borel $\sigma$-algebra on $\IPspace$ will be proved 
at the end of this paper, in Proposition \ref{prop:haus:sig}.

\begin{proof}
 (i) Fix $\epsilon > 0$. Consider an arbitrary $\eta\in\IPspace$ with $\IPLT_{\eta}(\infty) > 1/\epsilon$. The pair $\left(\eta,\eta^L_{\epsilon/2}\right)$ defined in \eqref{eq:IP:separate_LT_mass} has the desired property.
 
 (ii) This is immediate from Definition \ref{def:IP:metric} of $d_H'$.
 
 (iii) First, we show $\dH(\beta,\gamma)\leq 3d_H'(\beta,\gamma)$ for every $\beta,\gamma\in\HIPspace$. Suppose $d_H'(\beta,\gamma) < x$ for some $x>0$. Then there is some correspondence $(U_i,V_i)_{i\in [n]}$ from $\beta$ to $\gamma$ with Hausdorff distortion less that $x$. Recall from before Definition \ref{def:IP:metric} that, in a correspondence, the $(U_i)$ and $(V_i)$ are each listed in left-to-right order. Let
 $$\beta' := \Concat_{i\in[n]} \{(0,\Leb(U_i))\}, \qquad \gamma' := \Concat_{i\in[n]} \{(0,\Leb(V_i))\}.$$
 By definition of Hausdorff distortion before Definition \ref{def:IP:metric}, $\IPmag{\beta} - \IPmag{\beta'} < x$, and likewise for $\gamma$ and $\gamma'$. Thus, for each $j\in [n-1]$, the right endpoint of $U_j$ and the left endpoint of $U_{j+1}$ are within distance $x$ of the corresponding point in $\beta'$, and similarly for the left endpoint of $U_1$ and the right endpoint of $U_n$. Thus, $d_H(\beta,\beta') < x$ and correspondingly for $\gamma$. Moreover, by definition of distortion, we also find $d_H(\beta',\gamma') < x$. By the triangle inequality, $\dH(\beta,\gamma)< 3x$, as desired.
 
 Now, consider $\beta\in\HIPspace$ and $\epsilon>0$. Take $\delta_0>0$ small enough that $\sum_{U\in\beta\colon \Leb(U)\leq 2\delta_0}\Leb(U) < \epsilon/3$. Let $K$ denote the number of blocks in $\beta$ with mass at least $2\delta_0$. Take $\delta := \min\{\delta_0,\epsilon/(6K+3)\}$. It suffices to show that for $\gamma\in\HIPspace$, if $d_H(\beta,\gamma)<\delta$ then $d_H'(\beta,\gamma)<\epsilon$.
 
 Suppose $d_H(\beta,\gamma)<\delta$ for some $\gamma\in\HIPspace$. Then for each $U\in\beta$ with $\Leb(U) > 2\delta_0 \geq 2\delta$, the midpoint of $U$ must lie within some block $V$ of $\gamma$. Consider the correspondence from $\beta$ to $\gamma$ that matches each such $(U,V)$. Then, by the bound on $d_H(\beta,\gamma)$, for each such pair, $|\Leb(U)-\Leb(V)|<2\delta\leq \epsilon/3K$. Moreover, by our choice of $\delta_0$, the total mass in $\beta$ excluded from the blocks in the correspondence is at most $\epsilon/3$. Similarly, the reader may confirm that the mass in $\gamma$ excluded from the correspondence is at most $(\epsilon/3)+2K\delta+\delta \leq 2\epsilon/3$. Thus, by Definition \ref{def:IP:metric} of $d_H'$, we have $d_H'(\beta,\gamma)<\epsilon$, as desired.
\end{proof}
 
% (ii) Take $\beta,\gamma\in\IPspace$ and suppose $\dI(\beta,\gamma) < x$ for some $x>0$. Then there is some correspondence $(U_i,V_i)_{i\in [n]}$ from $\beta$ to $\gamma$ with distortion less that $x$. Recall from Definition \ref{def:IP:metric} that, in a correspondence, the $(U_i)$ and $(V_i)$ are each listed in left-to-right order. Let
% $$  \beta' := \Concat_{i\in[n]} \{(0,\Leb(U_i))\}, \qquad \gamma' := \Concat_{i\in[n]} \{(0,\Leb(V_i))\}. $$
% By definition of distortion, $\IPmag{\beta} - \IPmag{\beta'} < x$, and likewise for $\gamma$ and $\gamma'$. Thus, for each $j\in [n-1]$, the right endpoint of $U_j$ and the left endpoint of $U_{j+1}$ are within distance $x$ of the corresponding point in $\beta'$, and similarly for the left endpoint of $U_1$ and the right endpoint of $U_n$. Thus, $d_H(\beta,\beta') < x$ and correspondingly for $\gamma$. Moreover, by definition of distortion, we also find $d_H(\beta',\gamma') < x$. The claimed result now follows from the triangle inequality.

%Whereas elsewhere in the paper we typically only consider interval partitions with the diversity property, in this section we must widen the scope to all interval partitions, as in Definition \ref{def:IP_1}. We are interested in the metric space $(\IPspace,\dI)$ of interval partitions with diversity as a subspace of a larger space of interval partitions.

%We prove Proposition \ref{prop:Hausdorff} \ref{item:Haus:sig} at the end of this appendix. 
%We now proceed towards proving Theorem \ref{thm:Lusin}, that $(\IPspace,\dI)$ is Lusin.

\begin{lemma}\label{lem:IP:sep}
 $(\IPspace,\dI)$ is path-connected and separable.
\end{lemma}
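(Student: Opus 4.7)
Path-connectedness and separability can both be approached by explicit correspondence constructions leveraging the scaling bound of Lemma~\ref{lem:IP:scale} and the concatenation operation of Definition~\ref{def:IP:concat}. For path-connectedness, given $\beta,\gamma \in \IPspace$, I would define the linear interpolation
\[
\beta_t := \scaleI[(1-t)][\beta] \concat \scaleI[t][\gamma], \qquad t \in [0,1],
\]
so that $\beta_0 = \beta$ and $\beta_1 = \gamma$ (with $\scaleI[0][\eta] = \emptyset$ and $\emptyset \concat \eta = \eta$). To prove $\dI$-continuity at $t \in [0,1]$, I would fix $\epsilon > 0$ and pick $\delta > 0$ so that both $\beta$ and $\gamma$ carry total mass less than $\epsilon/4$ in blocks of Lebesgue measure below $\delta$. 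For $|s-t|$ small, build a correspondence between $\beta_s$ and $\beta_t$ by pairing, in left-to-right order, the rescaled versions of each large block of $\beta$ (by factors $1-s$ and $1-t$) and each large block of $\gamma$ (by factors $s$ and $t$). Then conditions (i)-(ii) of Definition~\ref{def:IP:metric} are at most $|s-t|(\IPmag{\beta} + \IPmag{\gamma}) + \epsilon/2$, while conditions (iii)-(iv) are at most $|(1-s)^\alpha - (1-t)^\alpha|\IPLT_\beta(\infty) + |s^\alpha - t^\alpha|\IPLT_\gamma(\infty)$; all four tend to zero as $s \to t$, using continuity of $u \mapsto u^\alpha$ on $[0,1]$.

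For separability, I would construct a countable dense $E \subset \IPspace$ from rational building blocks. Fix once and for all a reference $\eta_0 \in \IPspace$ with $\IPmag{\eta_0}, \IPLT_{\eta_0}(\infty) > 0$; for instance, $\{(a_n, a_{n-1}) : n \geq 1\}$ with $a_n = \sum_{k>n} k^{-1/\alpha}$ satisfies \eqref{eq:IPLT} by a direct count of blocks of size above $h$. Let $E$ consist of all finite concatenations whose factors are either $\{(0,r)\}$ for $r \in \mathbb{Q}_{>0}$ or $c \scaleI \eta_0$ for $c \in \mathbb{Q}_{>0}$; then $E$ is countable. To prove density, given $\beta \in \IPspace$ and $\epsilon > 0$, I would choose $\delta > 0$ so that blocks of $\beta$ of mass less than $\delta$ contribute total mass less than $\epsilon/4$, enumerate the large blocks $U_1 < \cdots < U_k$ of $\beta$, and let $(M_i, D_i)$ for $i = 0, \ldots, k$ denote the mass and diversity increment of the $i$-th intervening gap region. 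Then I would build $\gamma \in E$ by interleaving a rational-length approximant of each $U_i$ with an $E$-building block approximating $(M_i, D_i)$ in both mass and diversity; the natural order-preserving correspondence pairing each $U_i$ with its approximant forces all four quantities in Definition~\ref{def:IP:metric} below $\epsilon$.

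The main obstacle is realizing an essentially arbitrary $(M_i, D_i)$ by a rational $E$-building block, since a single scaled piece $c \scaleI \eta_0$ enforces a fixed diversity-to-mass ratio while pure rational intervals have zero diversity. The resolution is that concatenating $n$ pieces $c_1 \scaleI \eta_0, \ldots, c_n \scaleI \eta_0$ yields total mass $\IPmag{\eta_0} \sum c_j$ and total diversity $\IPLT_{\eta_0}(\infty) \sum c_j^\alpha$; since $\alpha < 1$, the image of $(c_1,\ldots,c_n) \mapsto (\sum c_j, \sum c_j^\alpha)$ as $n$ ranges over $\BN$ covers all pairs $(m,d)$ with $d/\IPLT_{\eta_0}(\infty) \geq (m/\IPmag{\eta_0})^\alpha$, while in the opposite regime a single scaled piece padded by a rational interval suffices. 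A rational perturbation of the $c_j$'s then produces the required $E$-element, and what remains is routine bookkeeping of the four distortion quantities.
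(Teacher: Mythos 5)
Your proof is correct, but it takes a genuinely different route from the paper's on both counts. For path-connectedness the paper simply contracts every $\eta$ to the empty partition along $c\mapsto\scaleI[c][\eta]$, $c\in[0,1]$, reading continuity directly off the scaling bound \eqref{eq:IP:scaling_dist_1} of Lemma~\ref{lem:IP:scale}; your direct interpolation $\scaleI[(1\!-\!t)][\beta]\concat\scaleI[t][\gamma]$ also works (finite concatenation preserves the diversity property, and your distortion estimates for the four quantities are right), it is just more machinery than needed since connecting everything to $\emptyset$ already suffices. For separability the difference is more substantive. The paper fixes a single reference partition $\eta$ with $\IPLT_\eta(\infty)>0$ and \emph{continuous} diversity function, rescales it so its total diversity matches that of the target $\beta$, strips it to its dust $(c\eta)^{\IPLT}_{\epsilon}$ (which retains all the diversity but mass at most $\epsilon$), and then inserts the finitely many large blocks of $\beta$ at rational diversity levels via the operation $\eta\oplus_s m$ -- the continuity of $\IPLT_\eta$ is exactly what guarantees that every prescribed level $s$ can be hit. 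You instead localize the diversity gap-by-gap and approximate each pair $(M_i,D_i)$ by a concatenation of scaled copies $c_j\scaleI\eta_0$ plus a rational padding interval, which forces you to solve the small realization problem for $(\sum_j c_j,\sum_j c_j^\alpha)$; your analysis of its range (all $(m,d)$ with $d\geq \IPLT_{\eta_0}(\infty)(m/\IPmag{\eta_0})^\alpha$ by concavity of $x\mapsto x^\alpha$ and the intermediate value theorem, the complementary regime handled by a single piece plus padding) is correct, and the degenerate case $M_i=0$ forces $D_i=0$ because the complement of $\beta$ is Lebesgue-null. What your approach buys is that the reference partition need not have a continuous diversity function (your explicit $\eta_0$ in fact has a jump at $0$), at the cost of the extra bookkeeping; what the paper's approach buys is that a single dust carrier spanning the whole partition handles all the diversity matching at once, with the only unpaired mass in the approximant being the $\epsilon$ of dust.
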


\begin{proof}
 For path-connectedness, just note that $c\mapsto\scaleI[c][\eta]$, $c\in[0,1]$, is a path from $\emptyset\in\IPspace$ to $\eta\in\IPspace$. Specifically, continuity holds since Lemma \ref{lem:IP:scale} yields for $0\leq a<b\leq 1$
 $$
  \dI(\scaleI[a][\eta],\scaleI[b][\eta]) = \dI\left( \scaleI[\frac{a}{b} ][b\scaleI \eta], \scaleI[b][\eta]\right)
  	\leq \max\left\{\left|b^\alpha - a^\alpha \right|\IPLT_\eta(\infty),\left|b-a\right|\IPmag{\eta}\right\}
. $$
 
 For separability, we fix a partition $\eta\in\IPspace$ with $\IPLT_{\eta}(\infty) > 0$ and such that $t\mapsto\IPLT_\eta(t)$ is continuous on $[0,\IPmag{\eta}]$.
 For the purpose of this proof we abbreviate our scaling notation from $\scaleI[c][\eta]$ to $c\eta$. We will construct a countable $S\subset\IPspace$ in which each element is formed by taking $(c\eta)^{\IPLT}_{\epsilon}$, as in \eqref{eq:IP:separate_LT_mass}, for some $c\geq 0$ and $\epsilon > 0$, and inserting finitely many large blocks into the middle, via the following operation.
% 
% We will construct a countable $S\subset\IPspace$ in which each element of $S$ is obtained by: (i) scaling $\eta$; (ii) removing a finite number of blocks; and (iii) inserting a finite number of blocks. Our scaling operation has already appeared in Definition \ref{def:IP:scaling}; for the purpose of this proof we abbreviate our notation from $\scaleI[c][\eta]$ to $c\eta$.
% Let
% \begin{align*}
%  A(\eta,\delta) := \{U\in\eta\ :\ \Leb(U)\ge\delta\} \qquad \text{for }\eta\in\IPspace,\ \delta > 0.
% \end{align*}
% Since $\IPmag{\eta-A(\eta,\delta)}\to 0$ as $\delta\downarrow 0$, we have
% \begin{align*}
%  \delta(\eta,\varepsilon) := \sup\big\{\delta\geq 0\ :\ \IPmag{\eta-A(\eta,\delta)} < \varepsilon\big\} > 0 \qquad \text{for }\varepsilon > 0.
% \end{align*}
% We define
% \begin{gather}
%  \eta_\varepsilon := \eta-A(\eta,\delta(\eta,\varepsilon))\text{; so}\label{eq:IP_w_LT_but_mass_epsilon}\\
%  \IPmag{\eta_\varepsilon} < \varepsilon \quad \text{and} \quad \IPLT_{\eta_{\epsilon}}(\infty) = \IPLT_\eta(\infty)\notag
% \end{gather}
% since we have only removed finitely many blocks.
 For $s\in \left[0,\IPLT_{\eta}(\infty)\right]$ and $m>0$, we define
 \begin{equation*}
  \eta\oplus_s m := \big(\{U\in\eta\colon \IPLT_{\eta}(U) \leq s\} \concat \{(0,m)\}\big) \cup \{(a+m,b+m)\colon (a,b)\in\eta,\ \IPLT_{\eta}(a)>s\}.
 \end{equation*}
  %\eta\oplus_s m := \Restrict{\eta}{[0,s]} \concat \{(0,m)\} \concat \Restrict{\eta}{(s,\infty)} \quad \text{where} \quad \Restrict{\eta}{B} := \eta - \{U\in\eta\colon\ \IPLT_\eta(U)\notin B\}.
 This operation inserts a new interval $V$ of length $m$ into the middle of $\eta$ in such a way that $\IPLT_{\eta\oplus_s m}(V) = s$. Let
 $$
  S := \left\{ (c\eta)^{\IPLT}_{\varepsilon} \oplus_{s_1} m_1 \cdots \oplus_{s_r} m_r\ \middle|
   		\begin{array}{l}
   			r\in\BN,\ s_1,\ldots,s_r \in [0,\IPLT_{c\eta}(\infty))\cap\BQ,\\
   			c,\epsilon,m_1,\ldots,m_r\in (0,\infty)\cap\BQ
   		\end{array}\right\}.
 $$
 By Lemma \ref{lem:IP:scale}, $\IPLT_{c\eta}(\infty) = c^\alpha \IPLT_\eta(\infty)$ for $c \geq 0$. Thus, any $\beta\in\IPspace$ can be approximated in $S$ by the partitions constructed from the following rational sequences. First, take rational
 $$
  c_n \to \left(\frac{\IPLT_\beta(\infty)}{\IPLT_\eta(\infty)}\right)^{1/\alpha},\qquad \epsilon_n = \frac1n \downto 0,\qquad\text{ and}\qquad r_n = \# \beta^L_{\epsilon_n}.
 $$
 Then let $\left\{U\in\beta\colon\ \Leb(U) > \delta\left(\beta,\epsilon_n\right)\right\} = \left\{\left(a_j^{(n)},a_j^{(n)}\!+\!k_j^{(n)}\right),\ j\in [r_n]\right\}$ with $a_1^{(n)}\le\cdots\le a_{r_n}^{(n)}$,
 where $\delta$ is as in \eqref{eq:IP:mass_cutoff_for_sep}. This is the sequence of blocks of $\beta$ that comprise $\beta^L_{\epsilon_n}$. Finally, we take rational sequences $\left(\left(s_j^{(n)},m_j^{(n)}\right),\ j\in [r_n]\right)$ so that 
$$
  \sup\nolimits_{j\in[r_n]}\left|s_j^{(n)}-\IPLT_\beta\left(a_j^{(n)}\right)\right| \le \epsilon_n \quad \text{and} \quad \sum\nolimits_{j\in[r_n]}\left|k_j^{(n)}-m_j^{(n)}\right| \le \epsilon_n.
 $$\vspace{-24pt}
 
\end{proof}

\begin{corollary}\label{cor3}
 There is a metric on $\IPspace$ that generates the same topology as $\dI$, for which $\IPspace$ is isometric to a subset of a compact metric space.
\end{corollary}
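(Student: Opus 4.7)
The plan is to invoke the classical fact that every separable metric space embeds topologically into the Hilbert cube, and to use such an embedding to transport a compact metric. Separability of $(\IPspace,\dI)$ is already in hand from Lemma~\ref{lem:IP:sep}, so I fix a countable dense sequence $(\eta_n)_{n\ge 1}\subset\IPspace$. Let $H:=[0,1]^{\mathbb{N}}$ equipped with the metric $\rho(x,y):=\sum_{n\ge 1}2^{-n}|x_n-y_n|$, under which $H$ is a compact metric space.

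Next I would define $\phi\colon\IPspace\to H$ by
\[
\phi(\beta)_n\ :=\ \frac{\dI(\beta,\eta_n)}{1+\dI(\beta,\eta_n)},\qquad n\ge 1,
\]
and verify that $\phi$ is a homeomorphism onto its image. Continuity is immediate, since each coordinate $\beta\mapsto \dI(\beta,\eta_n)/(1+\dI(\beta,\eta_n))$ is $\dI$-continuous, and coordinatewise convergence in $H$ agrees with $\rho$-convergence. For injectivity and continuity of the inverse, suppose $\phi(\beta_k)\to\phi(\beta)$; then $\dI(\beta_k,\eta_n)\to\dI(\beta,\eta_n)$ for every $n$. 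Given $\varepsilon>0$, density yields some $n$ with $\dI(\beta,\eta_n)<\varepsilon/3$, whence $\dI(\beta_k,\eta_n)<2\varepsilon/3$ for large $k$, and the triangle inequality forces $\dI(\beta_k,\beta)<\varepsilon$.

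Finally, set $d^*(\beta,\gamma):=\rho(\phi(\beta),\phi(\gamma))$. Since $\phi$ is injective, $d^*$ is a genuine metric on $\IPspace$; since $\phi$ is a homeomorphism onto $\phi(\IPspace)\subset H$, the metric $d^*$ generates the same topology as $\dI$; and by construction $\phi$ is an isometry $(\IPspace,d^*)\to(H,\rho)$, exhibiting $\IPspace$ as isometric to a subset of the compact metric space $H$.

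There is no real obstacle here: once separability is secured via Lemma~\ref{lem:IP:sep}, the corollary is just the Urysohn embedding into the Hilbert cube, with the metric pulled back along $\phi$. The only points worth writing out are the continuity of each coordinate of $\phi$ (trivial from the triangle inequality for $\dI$) and the density argument showing that $\phi^{-1}$ is continuous.
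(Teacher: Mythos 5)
Your argument is correct and is essentially the paper's proof: the paper simply invokes separability of $(\IPspace,\dI)$ from Lemma~\ref{lem:IP:sep} together with Dudley's Theorem 2.8.2, whose content is exactly the Hilbert-cube embedding you have written out. Unpacking that citation into the explicit map $\phi(\beta)_n=\dI(\beta,\eta_n)/(1+\dI(\beta,\eta_n))$ and the pulled-back metric $d^*$ is a faithful, self-contained rendering of the same argument.
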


\begin{proof}
 Since $(\IPspace,\dI)$ is a separable metric space, Dudley's \cite[Theorem 2.8.2]{Dudley02} applies.
\end{proof}

Unfortunately, this argument is unsuitable to show that the subset can be chosen as a Borel subset. Indeed, the argument can be applied to non-Borel subsets of a compact metric space. To prove this, we introduce a larger metric space %This is done in the appendix.
%\section{An explicit completion of $(\IPspace,\dI)$}
%To show that $(\IPspace,\dI)$ is Lusin, we define a metric space 
$(\cJ,d_\cJ)$, on pairs $(\eta,f)$, where $\eta\in\HIPspace$ is an interval partition
and where $f$ is a right-continuous increasing function that is not necessarily $f=\IPLT_\eta(\cdot+)$, which may not even exist, but which shares 
the property of $\IPLT$ to be constant on intervals $U\in\eta$. Then $(\beta,\IPLT_\beta(\cdot+))\in\cJ$ for all $\beta\in\IPspace$. 

The reader may wonder why we take the process of right limits $\IPLT_\beta(\cdot+)$ associated with $\IPLT_\beta$. First note that, in general, 
$\IPLT_\beta$ may be neither left- nor right-continuous. E.g., take any interval partition $\beta$ with positive diversity $D=\IPLT_\beta(\infty)$ 
and reorder the blocks in ranked order of mass. Then the resulting interval partition has zero diversity function, jumping to $D$ at $\IPmag{\beta}$.
If we instead arrange intervals of even rank from the left and of odd rank from the right, accumulating in the ``middle'', at $t$, say, then the 
diversity function of the resulting interval partition is constant $0$ on $(0,t)$, constant $D$ on $(t,\infty)$ and $D/2$ at $t$.  

We use right-continuous functions in $\cJ$ to be definite. We actually only care about the values that $f$ takes on the intervals of constancy. But
we prefer to work with representatives in a familiar class of functions.

\begin{definition}\label{def:dJ_metric} Let $\cJ$ be the set of pairs $(\eta,f)$, where $\eta$ is an interval partition of $[0,\IPmag{\eta}]$ with $\Leb([0,\IPmag{\eta}]\setminus\bigcup_{U\in\eta}U)=0$, and where $f\colon[0,\infty]\rightarrow[0,\infty)$ is a right-continuous increasing function that is constant on every interval 
$U\in\eta$ and on $[\IPmag{\eta},\infty]$. We replace $\IPLT_\eta$ and $\IPLT_\beta$ in Definition \ref{def:IP:metric}, the definition of $\dI(\eta,\beta)$, by $f$ and $g$, to define $d_\cJ((\eta,f),(\beta,g))$. 
\end{definition}

Recall the Skorokhod metric of \cite[equations (14.12), (14.13)]{Billingsley}; we denote this by $d_{\cD}$. For $n\ge1$, let $\cJ_n\subseteq\cJ$ 
denote the set of $(\beta,f)\in\cJ$ for which $\beta$ has exactly $n$ blocks. For $n\geq1$ and $\beta\in\HIPspace$, let $\beta_n$ denote the 
interval partition formed by deleting all but the $n$ largest blocks from $\beta$ (breaking ties via left-to-right order) and sliding these large 
blocks together, as in the construction of $\eta^L_{\epsilon}$ in \eqref{eq:IP:separate_LT_mass}. For $(\beta,f)\in\cJ$, equip $\beta_n$ with the
function $f_n$ that is constant on each block of $\beta_n$ with the value that $f$ takes on the corresponding block of $\beta$. 

\begin{lemma}\label{lem:dJ_metric}
 \begin{enumerate}\item[\rm(i)] The distance function $d_\cJ$ is a metric on $\cJ$.
   \item[\rm(ii)] %\label{lem:Haus_Skor_fin}
     For $n\geq1$, the metric $\dJ$ on $\cJ_n$ is topologically equivalent to the maximum of $\dH'$ in the first coordinate and $d_{\cD}$ in 
     the second. 
   \item[\rm(iii)] The maps $\beta\mapsto\beta_n$ and $(\beta,f)\mapsto(\beta_n,f_n)$ are Borel under $\dH'$ and $\dJ$ respectively.
   \item[\rm(iv)] The map $(\eta,f)\mapsto f$ is Borel from $(\cJ,\dJ)$ to $(\cD,d_\cD)$.
 \end{enumerate}  
\end{lemma}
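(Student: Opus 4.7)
The plan is to handle the four parts in sequence, leveraging the earlier framework and with (iii)--(iv) as the main work.

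For (i), symmetry is built into the definition. Positive-definiteness: if $\dJ((\eta,f),(\beta,g))=0$, I would take a sequence of correspondences with distortions $\downto 0$. Quantities (i)--(ii) of Definition \ref{def:IP:metric} force $\IPmag{\eta}=\IPmag{\beta}$ and every block of $\eta$ to be matched exactly (in mass and position) with a block of $\beta$; quantity (iii) forces the matched $f$- and $g$-values to coincide. Combined with right-continuity and constancy on blocks, this yields $f=g$ on $[0,\infty]$. The triangle inequality is a verbatim transcription of the proof of Proposition \ref{prop:IP_metric}: the manipulations of the four distortion quantities there never used the diversity interpretation, only the sup-norm structure of (iii)--(iv), so replacing $\IPLT_\eta,\IPLT_\beta,\IPLT_\gamma$ by $f,g,h$ carries the argument over.

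For (ii), suppose $(\beta,f),(\gamma,g)\in\cJ_n$ and $\dJ((\beta,f),(\gamma,g))<\epsilon$ for $\epsilon$ smaller than the smallest block mass on either side. Then an approximately optimal correspondence must pair all $n$ blocks (omitting one contributes at least a block-mass to quantity (i) or (ii)). Once all blocks are paired in the unique left-to-right way, quantities (i)--(ii) control $d_H'(\beta,\gamma)$ by the argument of Proposition \ref{prop:Hausdorff}(iii), while (iii) bounds $\sup_j|f(U_j)-g(V_j)|$. Using that $f,g$ are constant on blocks, the piecewise-linear time-change $\lambda$ sending the $j$-th block of $\beta$ onto the $j$-th block of $\gamma$ has $\|\lambda-\textnormal{id}\|_\infty$ bounded by the mass-differences and $\|f-g\circ\lambda\|_\infty$ bounded by (iii), giving $d_\cD(f,g)\lesssim\dJ$. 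Conversely, small $d_H'(\beta,\gamma)+d_\cD(f,g)$ yields a natural blockwise correspondence of small distortion.

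For (iii), fix $n\geq 1$. The ranked mass sequence is continuous in $d_H$ (Theorem \ref{thmcont}(c)) and hence in $d_H'$ (Proposition \ref{prop:Hausdorff}), so the open set $O_n\subseteq\HIPspace$ where the $n$-th and $(n+1)$-th largest block masses are strictly distinct is Borel, and on $O_n$ the top-$n$ blocks of a partition are locally isolated; selecting them and sliding them together, as in the construction of $\eta^L_\epsilon$ in \eqref{eq:IP:separate_LT_mass}, is continuous. The complement decomposes as a countable union of Borel sets indexed by rational approximations to the tying mass and the tie-breaking configuration, on each of which the left-to-right tie-broken truncation is again continuous. Hence $\beta\mapsto\beta_n$ is Borel. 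Passing to $\cJ$, the map $(\beta,f)\mapsto(\beta_n,f_n)$ simply also records the $f$-values on each selected block, which is Borel on each piece by the structure of $\cJ$.

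For (iv), I would compose the Borel map from (iii) with the projection $(\beta_n,f_n)\mapsto f_n$, which by (ii) is continuous from $\cJ_n$ to $\cD$, obtaining a Borel map $\Psi_n\colon(\eta,f)\mapsto f_n$ for each $n$. Since $\IPmag{\beta_n}\to\IPmag{\eta}$ as $n\to\infty$ (the complement of $\bigcup_{U\in\eta}U$ being Lebesgue-null and the block masses summable), the piecewise-linear time-change expanding $[0,\IPmag{\beta_n}]$ onto the union of top-$n$ blocks of $\eta$ tends to the identity in the Skorokhod sense, and the time-changed $f_n$ agrees with $f$ on top-$n$ blocks; the uniform error off those blocks tends to zero by right-continuity of $f$ and the vanishing variation of $f$ across the excluded small blocks (since the total variation of $f$ is finite). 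Thus $f_n\to f$ in $d_\cD$, so $(\eta,f)\mapsto f$ is a pointwise limit of Borel maps, hence Borel. The main obstacle is the careful bookkeeping of ties in (iii) and the verification that the time-change-corrected $f_n$ converges in $d_\cD$ for every $(\eta,f)\in\cJ$; the remainder is largely mechanical given Propositions \ref{prop:IP_metric} and \ref{prop:Hausdorff}.
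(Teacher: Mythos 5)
Parts (i) and (ii) follow the paper's argument essentially verbatim (reduce (i) to positive-definiteness via right-continuity and constancy on blocks; for (ii) note that below the minimal block mass the only admissible correspondence is the full order-preserving pairing and convert between it and a piecewise-linear time change). Part (iii) takes a slightly different route from the paper --- the paper pins down the left-to-right order of the top blocks by an explicit countable restriction procedure and then extracts the $f$-values via upper-semicontinuous functionals $y_{n,k}$, whereas you decompose according to ties in the ranked masses --- but both are workable sketches at the same level of detail as the original.

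The genuine problem is in (iv), in the claim that $f_n\to f$ in $d_\cD$ because of ``the vanishing variation of $f$ across the excluded small blocks.'' This is false: $f$ is constant on every block of $\eta$, so \emph{all} of its variation sits on the complement $C_\eta$, and the sum over the gaps between consecutive selected blocks of the increments $f(b)-f(a)$ equals $f(\infty)-f(0)$ for \emph{every} $n$. Worse, the increment of $f$ across a single gap converges, as the gap shrinks onto a point $t_0\in C_\eta$, to the jump $f(t_0)-f(t_0-)$, which is typically positive (the paper itself stresses that $f=\IPLT_\eta(\cdot\,+)$ need not be continuous, e.g.\ for blocks in ranked order). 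Consequently the naive block-endpoint time change $\lambda_n$ gives $\sup_t|f_n(\lambda_n(t))-f(t)|\geq$ (largest jump of $f$) $-o(1)$, and your stated argument does not establish Skorokhod convergence. The convergence $f_n\to f$ in $J_1$ is in fact true, but proving it requires choosing $\lambda_n$ to align the finitely many jumps of $f$ of size $>\epsilon$ with the corresponding gap images and a compactness argument showing that the $f$-increment over any short gap exceeds the local jump size by at most $\epsilon$; none of this is in your proposal. The cleaner repair --- and the paper's actual route --- is to drop the $d_\cD$-convergence claim altogether: right-continuity of $f$ gives the pointwise convergence $f_n(t)\to f(t)$ for each fixed $t$, so each evaluation $(\eta,f)\mapsto f(t)$ is a pointwise limit of Borel maps, and since the evaluation maps generate the Borel $\sigma$-algebra of the Skorokhod space \cite[Theorem 14.5]{Billingsley}, measurability of $(\eta,f)\mapsto f$ follows without ever proving convergence in $d_\cD$.
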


\begin{proof}
 (i) Given the proof of Proposition \ref{prop:IP_metric}, the only change needed for this part of the lemma is in proving positive-definiteness, since now $f$ is not determined by $\eta$. However, this follows easily since we assume that $f$ is right-continuous and constant on each $U\in\eta$ and on $[\IPmag{\eta},\infty]$, and $f$ is therefore determined by the values it takes on these sets.
 
 (ii) Fix $(\beta,f)\in\cJ_n$. We denote the blocks of $\beta$ by $U_1,\ldots,U_n$, in left-to-right order. Take $r\in \big(0,\min_{j\in [n]}\Leb(U_j)\big)$. We will show that, for $(\gamma,g)\in\cJ_n$, we get $\dJ((\beta,f),(\gamma,g))<r$ if and only if both $\dH'(\beta,\gamma) < r$ and $d_{\cD}(f,g) < r$.
 
 Consider $(\gamma,g)\in\cJ_n$ with $\dH'(\beta,\gamma) < r$ and $d_{\cD}(f,g) < r$. Since we have required $r$ to be smaller than all block masses in $\beta$, the only correspondence from $\beta$ to $\gamma$ that can have Hausdorff distortion less than $r$ is $(U_i,V_i)_{i\in[n]}$, where $V_1,\ldots,V_n$ denote the blocks of $\gamma$ in left-to-right order. In particular, $\sum_{i\in[n]}|\Leb(V_i)-\Leb(U_i)| < r$. Thus, in order for a continuous time-change $\lambda\colon [0,\IPmag{\beta}]\to [0,\IPmag{\gamma}]$ to never deviate from the identity by $r$, it must map some time in each $U_i$ to a time in the corresponding $V_i$. Therefore, by our bound on $d_{\cD}$, we have $\max_{i\in[n]}|g(V_i)-f(U_i)| < r$. We conclude that $\dJ((\beta,f),(\gamma,g))<r$.
 
 Now, consider $(\gamma,g)\in\cJ_n$ with $\dJ((\beta,f),(\gamma,g)) < r$. Following our earlier notation, the only correspondence that can give distortion less than $r$ is $(U_i,V_i)_{i\in[n]}$. %As in Proposition \ref{prop:Hausdorff} (ii)%\ref{item:Haus:leq}
%, 
It follows immediately from Definitions \ref{def:IP:metric} and \ref{def:dJ_metric} of $\dH'$ and $\dJ$ that $\dH'(\beta,\gamma)\leq \dJ((\beta,f),(\gamma,g)) < r$. We define $\lambda\colon [0,\IPmag{\beta}]\to [0,\IPmag{\gamma}]$ by mapping the left and right endpoints of each $U_j$ to the corresponding left and right endpoints of $V_j$ and interpolating linearly. Since $\sum_{i\in[n]}|\Leb(V_i)-\Leb(U_i)| < r$, it follows that $|\lambda(t)-t| < r$ for $t\in [0,\IPmag{\beta}]$ as well. By definition of $\dJ$, we have $|g(V_i)-f(U_i)| < r$ for each $i\in [n]$. Thus, $|g(\lambda(t)) - f(t)| < r$ for $t\in [0,\IPmag{\beta}]$. This gives $d_{\cD}(f,g) < r$.
 %
 %First, $\dJ$ can easily be seen to be bounded above by the maximum of $\dH'$ in the first coordinate and $d_{\cD}$ in the second. Now, fix $(\beta,f)\in\cJ_n$ and $\epsilon>0$. We write $\beta = \{U_1,\ldots,U_n\}$, where these are listed in left-to-right order. Let $\delta := \epsilon\wedge\min_{j\in [n]}\Leb(U_j)$.
 %
 %Consider $(\gamma,g)\in\cJ_n$ with $\dJ((\beta,f),(\gamma,g)) < \delta$. The only correspondence between these two pairs that can give distortion less than $\delta$ is the correspondence that matches $U_j$ with the $j^{\text{th}}$ block in $\gamma$, denoted by $V_j$, for each $j\in [n]$, as any other correspondence would omit mass at least $\epsilon$. As in Proposition \ref{prop:Hausdorff} \ref{item:Haus:leq}, it follows immediately from the characterization of $\dJ$ in Lemma \ref{lem:dJ_metric} and Definition \ref{def:Hausdorff} of $\dH'$ that $\dH'(\beta,\gamma)\leq \dJ((\beta,f),(\gamma,g)) < \delta \leq \epsilon$.
 %
 %We define $\lambda\colon [0,\IPmag{\beta}]\to [0,\IPmag{\gamma}]$ by mapping the left and right endpoints of each $U_j$ to the left and right endpoints of $V_j$, and interpolating linearly. Since $\dH'(\beta,\gamma)<\epsilon$, it follows that $|\lambda(t)-t| < \epsilon$ for $t\in [0,\IPmag{\beta}]$ as well. By the controls on diversity in $\dJ$, we have $|g(V_j)-f(U_j)| < \delta$ for each $j\in [n]$. Thus, $|g(\lambda(t)) - f(t)| < \epsilon$ for $t\in [0,\IPmag{\beta}]$. This gives $d_{\cD}(f,g) < \epsilon$.
 
 (iii) The map $\ranked$ that sends $\beta\in\HIPspace$ to the vector of its order statistics is continuous under $\dH'$. The restriction map $(\beta,t)\mapsto \restrict{\beta}{[0,t]} := \{U\cap (0,t)\colon U\in\beta,\,U\cap (0,t)\neq\emptyset\}$ is continuous from $\dH'$ plus the Euclidean metric to $\dH'$. If $\ranked(\beta) = (x_1,x_2,\ldots)$, then we determine whether the block of mass $x_1$ is to the right of the block of mass $x_2$ by finding the least $t_1,t_2\in x_2\BN$ for which $\restrict{\beta}{[0,t_1]}$ has $x_1$ as its first order statistic and $\restrict{\beta}{[0,t_2]}$ has $(x_1,x_2)$ as its first two order statistics. If $t_1<t_2$ then $\beta_2 = \{(0,x_1),(x_1,x_1+x_2)\}$; otherwise, $\beta_2 = \{(0,x_2),(x_2,x_2+x_1)\}$. This method extends to give the desired measurability of $\beta\mapsto\beta_n$. 
 
 Now let $y_1(\beta,f):=f(U_1)$, where $U_1\in\beta$ is the longest interval (the left-most of these, if there are ties). Then  
 $\{(\gamma,g)\in\cJ\colon y_1(\gamma,g)>z\}$ is open in $(\cJ,\dJ)$. This extends to show the measurability of the functions 
 $y_{n,k}\colon\cJ\rightarrow[0,\infty)$, $1\le k\le n$, that assign to $(\beta,f)$ the values 
 $y_{n,k}(\beta,f)$ of $f$ on the $n$ longest intervals of $\beta$, in left-to-right order. This allows to measurably construct 
 $f_n$ from $(\beta,f)$, which entails the measurability of $(\beta,f)\mapsto(\beta_n,f_n)$.   
 
 (iv) As right-continuity of $f$ entails $\lim_{n\uparrow\infty}f_n(t)=f(t)$, the measurability of $(f,\eta)\mapsto f(t)$ for each $t\in[0,\infty)$ 
 follows from (iii). By \cite[Theorem 14.5]{Billingsley}, the Borel $\sigma$-algebra on Skorokhod space is generated by the evaluation maps, so 
 the claimed measurablity of $(\eta,f)\mapsto f$ follows. 
\end{proof}

%We prove this proposition by identifying an explicit completion $(\cJ,d_\cJ)$ of $(\IPspace,\dI)$, which allows to identify $\IPspace$ with a Borel subset. %The passage from $\cK$ to $\ol{\cK}$ is easily achieved by one-point compactification.

 For $t\ge0$, let 
\begin{equation}\label{eq:div_approx}
 D_{\beta,n}(t) := \Gamma(1-\alpha) x_n^\alpha \#\{(a,b)\in\beta_n\colon b\leq t\}, \qquad \text{where} \qquad x_n = \min\{\Leb(U)\colon U\in\beta_n\}.
\end{equation}
If the following two limits are equal, then we adapt Definition \ref{def:diversity_property} to additionally define
\begin{equation}\label{eq:div_rt_cts}
\begin{split}
 \IPLT_{\beta}^+(t) :=&\ \lim_{u\downto t}\limsup_{h\downto 0} \Gamma(1-\alpha) h^\alpha \#\{(a,b)\in\beta\colon (b-a)>h,\, b\leq u\}\\
 	=&\ \lim_{u\downto t}\liminf_{h\downto 0} \Gamma(1-\alpha) h^\alpha \#\{(a,b)\in\beta\colon (b-a)>h,\, b\leq u\}.
\end{split}
\end{equation}
%Note that for $\beta\in\IPspace$ we have $\IPLT_{\beta}(\,\cdot\,+) = \IPLT_{\beta}^+$.
%\pagebreak

\begin{lemma}\label{lem:dJ_meas}
 \begin{enumerate}[label=(\roman*), ref=(\roman*)]
  \item[\rm(i)] The map $\beta_n\mapsto D_{\beta,n}$ is Borel under $\dH'$.\label{item:dJm:order_stats}
  \item[\rm(ii)] The set $\{(\beta,t)\in \HIPspace\times [0,\infty)\colon \IPLT_{\beta}(t)\text{ exists}\}$ is Borel under $\dH'$ in the first coordinate plus the Euclidean metric in the second. The map $(\beta,t)\mapsto\IPLT_{\beta}(t)$ is measurable on this set, under the same $\sigma$-algebra. The same assertions hold with $\IPLT_{\beta}(t)$ replaced by $\IPLT_{\beta}^+(t)$.
  %\item The map $(\beta,t)\mapsto\cf\{\IPLT_{\beta}(t)\text{ exists}\}\IPLT_{\beta}(t}$ is Borel measurable on this set, under $\dH'$. The same assertions hold if we replace $\IPLT_{\beta}(t)$ with $\IPLT_{\beta}(t+)$.
  %
  %For $\beta\in\HIPspace$ we have $\IPLT_{\beta}(s+) = \lim_{m\upto\infty}\lim_{n\upto\infty}D_{\beta,n}(s+m^{-1})$, and each limit exists if and only if the other exists.
  \label{item:dJm:diversity}
  \item[\rm(iii)] For $\beta\in\IPspace$, the pairs $(\beta_n,D_{\beta,n})$ converge to $(\beta,\IPLT_{\beta}(\,\cdot\,+))$ under $d_{\cJ}$.\label{item:dJm:cnvgc}
  %\item The map $(\beta,f)\mapsto f$ from $\iota(\IPspace)$ to the Skorokhod space of non-decreasing, \cadlag\ functions, denoted by $\cD_{\uparrow}$, is Borel measurable under $\dJ$.\label{item:dJm:diversity}
%  For $(\beta,f)\in\iota(\IPspace)$, the pairs $(\beta_n,D_{\beta,n})$ converge to $(\beta,\IPLT_{\beta}(\,\cdot\,+))$ in $\dJ$.
 \end{enumerate}
\end{lemma}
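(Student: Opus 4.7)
The plan is to handle the three parts in order: the first two are measurability arguments building on Lemma \ref{lem:dJ_metric}, and the third is a convergence estimate via an explicit correspondence.

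For (i), the map $\beta\mapsto\beta_n$ is Borel under $d_H'$ by Lemma \ref{lem:dJ_metric}(iii), and given $\beta_n$, the step function $D_{\beta,n}$ is determined by the $n$ right endpoints of its blocks together with the common jump size $\Gamma(1-\alpha)x_n^\alpha$. Since the Borel $\sigma$-algebra on Skorokhod space is generated by evaluations \cite[Theorem 14.5]{Billingsley}, it suffices to verify that $\beta\mapsto D_{\beta,n}(t)$ is Borel for each $t$, which reduces to counting blocks of $\beta_n$ whose right endpoint lies below $t$. For (ii), for each rational $h>0$ the approximation $N_h(\beta,t):=h^\alpha\#\{(a,b)\in\beta\colon b-a>h,\ b\leq t\}$ is Borel in $(\beta,t)$ under $d_H'$ and the Euclidean metric, since perturbing $\beta$ slightly preserves the count except at parameters where some block mass equals $h$ or some right endpoint equals $t$. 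The set where $\IPLT_\beta(t)$ exists equals the set where $\limsup$ and $\liminf$ of $\Gamma(1-\alpha)N_h(\beta,t)$ along rational $h\downto 0$ coincide, both Borel; the common value is then Borel. The double-limit definition of $\IPLT_\beta^+(t)$ is handled analogously by taking a further rational limit in $u\downto t$.

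For (iii), fix $\beta\in\IPspace$. For each $n$, let $U_1,\ldots,U_n$ denote the $n$ longest blocks of $\beta$ in left-to-right order (breaking ties as in the definition of $\beta_n$), and $V_1,\ldots,V_n$ the blocks of $\beta_n$, so $\Leb(U_j)=\Leb(V_j)$ by construction. Consider the correspondence $(U_j,V_j)_{j\in[n]}$ between $(\beta,\IPLT_\beta(\cdot+))$ and $(\beta_n,D_{\beta,n})$. Its distortion components in Definition \ref{def:IP:metric} are: (i) $\IPmag{\beta}-\sum_j\Leb(U_j)\to 0$ by summability of block masses; (ii) identically $0$; and (iv) $|\IPLT_\beta(\infty)-\Gamma(1-\alpha)x_n^\alpha n|$, which vanishes since $n=\#\{U\in\beta\colon\Leb(U)\geq x_n\}$ and $\beta\in\IPspace$, so $x_n\downto 0$ and the $\alpha$-diversity property applies.

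The substantive step is component (iii): bounding $\sup_{j\in[n]}|\IPLT_\beta(U_j+)-D_{\beta,n}(V_j)|$. Writing $U_j=(a_j,b_j)$, one has $D_{\beta,n}(V_j)=\Gamma(1-\alpha)x_n^\alpha(j-1)=\Gamma(1-\alpha)x_n^\alpha\#\{U=(a,b)\in\beta\colon b-a\geq x_n,\ b\leq a_j\}$, which is exactly the pre-limit approximation to $\IPLT_\beta(U_j+)$ with cutoff $h=x_n\downto 0$; so pointwise convergence is immediate from $\beta\in\IPspace$. To upgrade this to uniform convergence over $j$, I would exploit that $\IPLT_\beta(\cdot+)$ is non-decreasing and cadlag on the compact $[0,\IPmag{\beta}]$: given $\epsilon>0$, select a finite grid $0=t_0<\cdots<t_K=\IPmag{\beta}$ containing every jump time of $\IPLT_\beta(\cdot+)$ of size exceeding $\epsilon$ and with all consecutive $\IPLT_\beta(\cdot+)$-increments below $\epsilon$. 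Pointwise convergence at each $t_k$ together with monotonicity in $t$ of both $\Gamma(1-\alpha)h^\alpha N_h(\beta,\cdot)$ and $\IPLT_\beta(\cdot+)$ sandwiches the deviation on each subinterval by $O(\epsilon)$, for $n$ large enough. This Dini-type uniform upgrade is the main obstacle, necessitated by the possibility of jumps in $\IPLT_\beta$.
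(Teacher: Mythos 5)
Parts (i) and (ii) are fine and amount to the same kind of routine measurability arguments as in the paper (which routes (ii) through the time-changes $\theta_{\beta,n}$ and the approximants $D_{\beta,n}\circ\theta_{\beta,n}$ rather than directly through your counts $N_h(\beta,t)$; either way works). Part (iii), however, has a genuine gap: the quantity you set out to control, $\sup_{j\in[n]}|\IPLT_\beta(U_j+)-D_{\beta,n}(V_j)|$ for the correspondence pairing \emph{all} $n$ largest blocks, does not tend to $0$ in general, so no uniform-convergence upgrade can succeed. Concretely, take $\beta\in\IPspace$ whose blocks are arranged in decreasing order of mass with total diversity $D=\IPLT_\beta(\infty)>0$ (the paper's own example of a partition with a jumping diversity function). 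Then $\IPLT_\beta(t)=0$ for every $t<\IPmag{\beta}$, since only finitely many blocks lie to the left of any such $t$; hence $\IPLT_\beta(U_j+)=0$ for every block. But the rightmost matched block gives $D_{\beta,n}(V_n)=\Gamma(1-\alpha)x_n^\alpha(n-1)\to D$, so the distortion of the full correspondence is bounded below by a sequence converging to $D>0$. Your Dini/P\'olya grid cannot exist in this situation either: you require the grid to contain every jump time of size exceeding $\epsilon$ \emph{and} to have all consecutive increments of $\IPLT_\beta(\cdot+)$ below $\epsilon$, but across a jump of size $D>\epsilon$ the increment over the adjacent cell is at least $D$ however fine the grid. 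The real failure is at blocks immediately to the \emph{left} of a jump time: there $\IPLT_\beta(U_j+)$ equals the left limit while $D_{\beta,n}(V_j)$ creeps up to the right value, and the discrepancy is the jump size.

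The repair is to use that $d_\cJ$ is an infimum over correspondences and to truncate. For fixed $k$, pair only the $k$ largest blocks $W_1,\ldots,W_k$ of $\beta$ (in left-to-right order) with their images in $\beta_n$, $n\ge k$. Distortion quantity (iii) is then a maximum of $k$ terms, each tending to $0$ as $n\to\infty$ by exactly the pointwise convergence you established; quantities (i) and (ii) are bounded, uniformly in $n$, by the total mass of $\beta$ outside its $k$ largest blocks; and (iv) tends to $0$ as you argue. Hence $\limsup_{n}d_\cJ\big((\beta_n,D_{\beta,n}),(\beta,\IPLT_\beta(\cdot+))\big)$ is at most the mass outside the $k$ largest blocks, and letting $k\to\infty$ completes the proof. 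This two-parameter limit (first $n\to\infty$, then $k\to\infty$) is the step your write-up is missing, and it is also how the paper's terse one-line proof of (iii) should be fleshed out.
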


\begin{proof}
 (i) The measurability of $\beta_n\mapsto D_{\beta,n}$ follows as in the proof of Lemma \ref{lem:dJ_metric} (iii) from the measurability of $\ranked$ and restrictions.
 
 (ii) The set of interval partitions with finitely many blocks is Borel in $(\HIPspace,d_H')$, and diversity is constant 0 for such interval partitions. It remains to check the claim for interval partitions with infinitely many blocks. Consider $\beta\in\HIPspace$ with infinitely many blocks. For $n\ge 1$, let $U_1,\ldots,U_n$ denote the $n$ largest blocks of $\beta$, in left-to-right order. Let $\theta_{\beta,n}\colon [0,\IPmag{\beta}]\to [0,\IPmag{\beta_n}]$ denote the continuous time-change starting from $\theta_{\beta,n}(0) = 0$, increasing with slope 1 on $\bigcup_{i\in [n]}U_i$, and having slope 0 on $[0,\IPmag{\beta}]\setminus \bigcup_{i\in [n]}\ol U_i$, where $\ol U$ denotes closure. 
 %Then $\theta_{\beta,n}(U_i)$ equals the block in $\beta_n$ corresponding to $U_i$, for $i\in [n]$. 
 Note that $\{\theta_{\beta,n}(U_1),\ldots,\theta_{\beta,n}(U_n)\} = \beta_n$. 
 It follows from similar arguments to those in the proof of Lemma \ref{lem:dJ_metric} (iii) that $\beta\mapsto\theta_{\beta,n}$ is measurable from $(\HIPspace,\dH')$ to $\cC([0,\infty),[0,\infty))$. Also, $f_n(\beta,t):=D_{\beta,n}(\theta_{\beta,n}(t))$ is Borel since pre-images of $(-\infty,x)$
 are open for all $x\in\mathbb{R}$, i.e.\ $f_n$ is upper semi-continuous.
 
 %$U\in\beta$ and $n\ge1$, let $\theta_{\beta,n}(U)$ denote the block in $\beta_n$ corresponding to $U$, or $\emptyset$ if $U$ is not one of the $n$ largest blocks. 
 By comparing \eqref{eq:div_approx} to Definition \ref{def:diversity_property} of $\IPLT_{\beta}$, for every $t\ge0$ we see that $\lim_{n\upto\infty}f_n(\beta,t)\!=\!\IPLT_{\beta}(t)$, with each limit existing if and only if the other exists. As $f_n$ is Borel, this proves the two claims for $\IPLT_{\beta}(t)$. 
 By monotonicity of the limiting terms in \eqref{eq:div_rt_cts}, $\IPLT_{\beta}^+(t)$ exists if and only if
 \begin{equation}\label{eq:divplus}\lim_{m\upto\infty}\limsup_{n\upto\infty}D_{\beta,n}\left(\theta_{\beta,n}\left([2^mt+1]2^{-m}\right)\right) = \lim_{m\upto\infty}\liminf_{n\upto\infty}D_{\beta,n}\left(\theta_{\beta,n}\left([2^mt+1]2^{-m}\right)\right).
 \end{equation}
 If these limits are equal, then they equal $\IPLT_{\beta}^+(t)$. This proves the two claims for $\IPLT_{\beta}^+(t)$.
% Next, note that by monotonicity, $\IPLT_{\beta}^+(t) = \lim_{m\upto\infty}\IPLT_{\beta}(t+m^{-1})$, with each existing if and only if the other does. This proves the second claim.

% For $\IPLT_{\beta}(t+)$, note that $\theta_{\beta,n}$ is increasing in $n$, converging uniformly to the identity on $[0,\IPmag{\beta}]$.  Thus, $\IPLT_{\beta}(t+) = \lim_{m\upto\infty}\lim_{n\upto\infty}D_{\beta,n}(t+m^{-1})$. This prove the second claim.
%Thus, for every $m\ge 1$ and $t\in[0,\IPmag{\beta}]$ there is some $N$ sufficiently large that for $n>N$, $\theta_{\beta,n}(t+m^{-1}) > t$.
 
 (iii) This follows from the previous argument by taking the correspondences from $\beta$ to $\beta_n$ that pair $U_i$ with $\theta_{\beta,n}(U_i)$, for each $i\in [n]$.
 %
 %By \cite[Theorem 14.5]{Billingsley}, the Borel $\sigma$-algebra on $\cD_{\uparrow}$ is generated by the evaluation maps. From our previous observations, $\IPLT_{\beta}(s+) = \lim_{m\upto\infty}\lim_{n\upto\infty}D_{\beta,n}(s+m^{-1})$. Thus, the map $(\beta,\IPLT_{\beta}(\,\cdot\,+) \mapsto \IPLT_{\beta}(s+)$ is Borel measurable under $\dJ$.%$(\beta,\IPLT_{\beta}(\,\cdot\,+) \mapsto \IPLT_{\beta}(\,\cdot\,+)$ is likewise measurable.
\end{proof}

% Thus, $(\beta_n,D_{\beta,n})$ converges to $(\beta,\IPLT_{\beta}(\,\cdot\,+))$ under $d_{\cJ}$ by taking the correspondences that pair the $n$ largest blocks $U\in\beta$ with the blocks $\theta_{\beta,n}(U)$.

\begin{lemma}\label{lmcompl}
 Consider the map $\iota\colon \IPspace\rightarrow\cJ$ given by $\iota(\eta)=(\eta,\IPLT_\eta(\,\cdot\,+))$.
 \begin{enumerate}[label=(\roman*), ref=(\roman*)]
  \item[\rm(i)] Both $\iota(\IPspace)$ and $\cJ\setminus\iota(\IPspace)$ are dense in $(\cJ,d_{\cJ})$.
  \item[\rm(ii)] Both $\iota(\IPspace)$ and $\cJ\setminus\iota(\IPspace)$ are Borel subsets of $\cJ$.
  \item[\rm(iii)] The space $(\cJ,d_\cJ)$ is a completion of $(\IPspace,\dI)$, with respect to the isometric embedding $\iota$.
 \end{enumerate}
\end{lemma}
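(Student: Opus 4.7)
The plan is to treat the three parts in the stated order.

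For \textbf{(i)}, I would handle density of $\iota(\IPspace)$ and of its complement separately. Fix a reference partition $\eta^\ast\in\IPspace$ with $\IPLT_{\eta^\ast}(\infty)=1$ and continuous diversity function (as in the proof of Lemma \ref{lem:IP:sep}). Given $(\eta,f)\in\cJ$ and $\varepsilon>0$, choose $n$ so that the blocks of $\eta$ outside its $n$ largest blocks $U_1,\ldots,U_n$ (in left-to-right order) contribute total mass at most $\varepsilon$. With the conventions $f(U_0):=0$ and $f(U_{n+1}):=f(\IPmag{\eta})$, set $\delta_k:=f(U_k)-f(U_{k-1})$ for $k\in[n+1]$ and define dust partitions
\[
 D_{k-1}:=\big(\scaleI[\delta_k^{1/\alpha}][\eta^\ast]\big)^{\IPLT}_{\varepsilon/(n+1)}
\]
via \eqref{eq:IP:separate_LT_mass}; by Lemma \ref{lem:IP:scale}, $D_{k-1}\in\IPspace$ has total diversity $\delta_k$ and mass at most $\varepsilon/(n+1)$. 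The finite concatenation
\[
 \beta^{(n)}:=D_0\concat\{(0,\Leb(U_1))\}\concat D_1\concat\cdots\concat\{(0,\Leb(U_n))\}\concat D_n
\]
lies in $\IPspace$ (strong summability is automatic for finitely many pieces), and by construction $\IPLT_{\beta^{(n)}}(V_k)=f(U_k)$ at the image $V_k$ of $U_k$ and $\IPLT_{\beta^{(n)}}(\infty)=f(\IPmag{\eta})$, so the correspondence $(U_k,V_k)_{k\in[n]}$ has $d_\cJ$-distortion at most $\varepsilon$. For density of $\cJ\setminus\iota(\IPspace)$, if $(\eta,f)\in\iota(\IPspace)$ the perturbation $\widetilde f:=f+\delta\cf_{[\IPmag{\eta},\infty]}$ lies in $\cJ$ but not in $\iota(\IPspace)$ by injectivity of $\iota$, at $d_\cJ$-distance at most $\delta$.

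For \textbf{(ii)}, the projection $\pi_1\colon(\cJ,d_\cJ)\to(\HIPspace,d_H')$, $(\eta,f)\mapsto\eta$, is $1$-Lipschitz since quantities (i) and (ii) of $\dis_\alpha$ coincide with those of $\dis_H$; hence $\pi_1^{-1}(\IPspace)$ is Borel by Theorem \ref{thmdHprime}(d). Within $\pi_1^{-1}(\IPspace)$, $\iota(\IPspace)=\{(\eta,f)\colon f=\IPLT_\eta(\cdot+)\}$. For each $t\ge0$ the evaluation $\eta\mapsto\IPLT_\eta^+(t)$ is Borel by Lemma \ref{lem:dJ_meas}(ii), and since the Borel $\sigma$-algebra on $(\cD,d_\cD)$ is generated by the evaluation maps \cite[Theorem 14.5]{Billingsley}, the map $\eta\mapsto\IPLT_\eta(\cdot+)$ is Borel into $\cD$. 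Combined with Lemma \ref{lem:dJ_metric}(iv), $\iota(\IPspace)$ is the preimage of the diagonal in $\cD\times\cD$, hence Borel, and so is its complement.

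For \textbf{(iii)}, the isometry identity $d_\cJ(\iota(\eta),\iota(\beta))=\dI(\eta,\beta)$ follows directly from Definitions \ref{def:IP:metric} and \ref{def:dJ_metric}, using that $\IPLT_\eta(\cdot+)$ agrees with $\IPLT_\eta$ on each block and at $\infty$; density is part (i). It remains to prove $(\cJ,d_\cJ)$ is complete. Given a Cauchy sequence $(\eta_n,f_n)$, the bound $d_\cJ\ge d_H'$ yields $\eta_n\to\eta^\ast\in\HIPspace$ by Theorem \ref{thmdHprime}(c). For the functional part, I would apply Lemma \ref{lem:dJ_metric}(ii) to the $N$-truncations: for each fixed $N$ the sequence $(\eta_{n,N},f_{n,N})\in\cJ_N$ is Cauchy in the product of $d_H'$ and $d_\cD$, hence converges to some $(\eta^\ast_N,f^\ast_N)\in\cJ_N$. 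The family $(f^\ast_N)_N$ is compatible as $N$ grows, since the correspondence matching the $N$ largest blocks of $\eta^\ast$ inside its $M\ge N$ largest blocks is preserved in the limit; this produces a right-continuous increasing $f^\ast$ on $[0,\infty]$ that is constant on every block of $\eta^\ast$. A diagonal argument combined with the Cauchy control of the tail diversity $|f_m(\infty)-f_n(\infty)|$ then shows $d_\cJ((\eta_n,f_n),(\eta^\ast,f^\ast))\to 0$.

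The main obstacle will be the completeness step in \textbf{(iii)}: one must manage the interplay between $n\to\infty$ along the Cauchy sequence and $N\to\infty$ exhausting the large blocks of $\eta^\ast$, while simultaneously verifying that quantity (iv) of $\dis_\alpha$ passes to the limit uniformly. The uniformity comes from the Cauchy property of $f_n(\infty)$, since the $N$-truncations discard precisely the small blocks whose cumulative $f$-contribution appears in (iv).
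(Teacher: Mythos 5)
Part (i) of your proposal is essentially the paper's argument (interleaving small-mass, positive-diversity ``dust'' between the retained large blocks is exactly the role of $\eta^{\IPLT}_{1/n}\oplus_s m$ there), and your perturbation $f+\delta\,\cf_{[\IPmag{\eta},\infty]}$ is a clean way to get density of the complement. The problems are in (ii) and, above all, in the completeness step of (iii). First, the argument for (iii) is circular: you invoke Theorem \ref{thmdHprime}(c) to extract $\eta_n\to\eta^\ast$ in $(\HIPspace,d_H')$, but in the paper the completeness of $(\HIPspace,d_H')$ is Corollary \ref{corcomplsep}, which is \emph{deduced from} part (iii) of this very lemma by viewing $\HIPspace\times\{0\}$ as a closed subset of $(\cJ,\dJ)$. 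Second, the truncation step does not work as stated: Lemma \ref{lem:dJ_metric}(ii) gives only \emph{topological} equivalence on $\cJ_N$, which does not transfer the Cauchy property between $\dJ$ and the product of $d_H'$ and $d_\cD$; the space $\cJ_N$ is not complete (the $N$-th block mass can tend to $0$); and ``the $N$ largest blocks'' is unstable under ties, so $(\eta_{n,N},f_{n,N})$ need not be Cauchy at all --- if $s^{(N)}=s^{(N+1)}$ in the limiting ranked mass sequence, which block is retained can oscillate with $n$ and carry different $f$-values and positions. The paper avoids all of this by first proving $\ell^1$-convergence of the ranked mass sequences directly from the mass terms of the distortion, and then truncating only at levels $r$ with $s^{(r)}>s^{(r+1)}$, where the $r$ largest blocks are eventually unambiguous and the truncated data form a Cauchy sequence in $(\BR^{2r},\|\cdot\|_\infty)$.

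For (ii), you outsource the essential difficulty to Theorem \ref{thmdHprime}(d) (that $\IPspace$ is Borel in $\HIPspace$). That statement is proved in the paper only \emph{after} this lemma (in the unproved corollary following Corollary \ref{corcomplsep}), and any proof of it must solve exactly the problem you are avoiding: expressing ``the limit defining $\IPLT_\eta(t)$ exists for \emph{every} $t$'' as a countable Borel condition. The paper does this by first requiring existence of $\IPLT^+_\eta$ and agreement with $f$ at rational times only (which, by monotonicity and right-continuity, forces $f=\IPLT^+_\eta$ everywhere), and then requiring existence of $\IPLT_\eta$ at the jump times of $f$, which are measurably enumerated via the point process of jumps. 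Your proof contains no substitute for this reduction, so as written both (ii) and (iii) rest on statements whose proofs in the paper depend on the lemma itself.
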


\begin{proof}
 (i) By the definitions of $\dI$ and $d_\cJ$, the map $\iota$ is an isometry. Take $(\beta,g)\in\cJ\setminus\iota(\IPspace)$ and $\eta\in\IPspace$ with $\IPLT_\eta(\infty) = g(\infty)\ge 0$ and such that $t\mapsto\IPLT_\eta(t)$ is continuous on $[0,\IPmag{\eta}]$. Using the notation of the proof of Lemma \ref{lem:IP:sep}, we consider
  $$
   \beta^{(n)} := \eta_{1/n}^{\IPLT}\oplus_{g\left(a_1^{(n)}\right)}k_1^{(n)}\cdots\oplus_{g\left(a_{r_n}^{(n)}\right)}k_{r_n}^{(n)}.
  $$
  Then $d_\cJ\left(\left(\beta^{(n)},\IPLT_{\beta^{(n)}}\right),(\beta,g)\right)\rightarrow 0$, i.e. $\beta$ is in the closure of $\iota(\IPspace)$. The same argument, with roles of $(\eta,\IPLT_\eta(\,\cdot\,+))$ and $(\beta,g)$ swapped (now $\eta\in\IPspace$ general and $(\beta,g)\in\cJ\setminus\iota(\IPspace)$ and such that $g$ is continuous), shows that $(\eta,\IPLT_\eta)$ is in the closure of $\cJ\setminus\iota(\IPspace)$. 
 
% (ii) Since $\eta\mapsto\IPLT_\eta(s+)$ is a Borel measurable function on $\IPspace$ for each $s$ and $s\mapsto\IPLT_\eta(s+)$ is right-continuous for all $\eta\in\IPspace$, we have
% $$\IPspace=\{\eta\in\IPspace_H\colon\ \IPLT_\eta(s)\mbox{ exists for all }s\in[0,\infty)\cap\BQ\};$$
%and since $(\eta,f)\mapsto f(s)$ is Borel measurable for all $s\ge 0$, this implies that
% $$\iota(\IPspace) = \{(\eta,f)\in\cJ\colon\ \eta\in\IPspace\mbox{ and }f(s)=\IPLT_\eta(s+)\mbox{ for all }s\in[0,\infty)\cap\BQ\}$$
% is a Borel subset of $\cJ$. 
 
 (ii) Recall that for $\eta\in\IPspace$ we have $\IPLT_{\eta}(\,\cdot\,+)=\IPLT_{\eta}^+$ identically. Thus,
 $$\iota(\IPspace)=\left\{(\eta,f)\in\mathcal{J}\colon\mbox{for all }t\in[0,\IPmag{\eta}],\; \IPLT_\eta(t)\mbox{ exists and }\IPLT^+_\eta(t)=f(t)\right\}.$$
 By Lemmas \ref{lem:dJ_metric} (iv) and \ref{lem:dJ_meas} (ii)%\ref{item:dJm:diversity} 
%and Proposition \ref{prop:Hausdorff} (ii)%\ref{item:Haus:leq}
, the following set is Borel under $\dJ$:
 $$A := \left\{(\eta,f)\in\cJ\colon \mbox{for all }t\in[0,\IPmag{\eta}]\cap\BQ,\; \IPLT^+_{\eta}(t)\text{ exists and equals }f(t)\right\}.$$
 For $(\eta,f)\in A$, writing $\IPLT^+_\eta(t)$ as in \eqref{eq:divplus}, we find that $\IPLT^+_\eta(t)$ exists for all $t\in[0,\infty)$ and 
% By the right-continuity of $f$ and monotonicity of $f$ and of the limiting terms in \eqref{eq:div_rt_cts}, %Definition \ref{def:diversity_property} of $\IPLT_{\eta}$, 
 by the right-continuity and monotonicity of $f$ and $\IPLT_{\eta}^+$ 
 we have $f = \IPLT_{\eta}^+$ identically. 
 By comparing Definition \ref{def:diversity_property} of $\IPLT_{\eta}$ with \eqref{eq:div_rt_cts}, we see that if $\IPLT_{\eta}^+$ is continuous at some $t\in [0,\IPmag{\eta}]$ then $\IPLT_{\eta}(t)$ exists and equals $\IPLT_{\eta}^+(t)$, by a sandwiching argument. Thus, $\iota(\IPspace)$ is the set of $(\eta,f)\in A$ for which $\IPLT_{\eta}(t)$ exists at each time $t$ at which $f$ jumps.
 
 By Lemma \ref{lem:dJ_metric} (iv), $(\eta,f)\mapsto f$ is measurable from $(A,\dJ)$ into Skorokhod space.  
 By \cite[Proposition II.(1.16)]{JacodShiryaev}, the map from $f$ to the point process of its jumps is measurable; and by \cite[Proposition 9.1.XII]{DaleyVereJones2}, we can measurably map the latter to a sequence $(t_1,\Delta_1),(t_2,\Delta_2),\ldots$ listing times and sizes of all jumps of $f$, though these may not be listed in chronological order. We write $\tau_i(\eta,f) := t_i$, or $\tau_i(\eta,f) := -1$ if $f$ has less than $i$ jumps. Then
 \begin{equation*}
  \iota(\IPspace) = \left\{(\eta,f)\in A\colon \text{for all }i\in\BN,\;\tau_i(\eta,f) = -1\text{ or }\IPLT_\eta(\tau_i(\eta,f))\text{ exists}\right\}.
 \end{equation*}
 By Lemma \ref{lem:dJ_meas} (ii)%\ref{item:dJm:diversity}
, this set is measurable.
 
 (iii) It is clear from the definition of $\dJ$, based on that of $\dI$, that $\iota$ is an isometry. Now consider any Cauchy sequence $((\eta_n,f_n),\ n\geq 1)$ in $(\cJ,\d_{\cJ})$. Then $(f_n(\infty),n\ge 1)$ is a Cauchy sequence in $[0,\infty)$; let us denote the limit by $f(\infty)$. Consider $(\beta^{(0)},f^{(0)})=(\emptyset,f(\infty))\in\cJ$, i.e. the empty partition with the increasing function that is constant $f(\infty)$. Recall the space $\cSd$ introduced above Theorem \ref{thmcont}. 
 Let $\fs_n=(s_n^{(i)},i\ge 1)=(\Leb (U),U\in\eta_n)^\downarrow\in\cSd$ be the decreasing rearrangement of interval sizes. Then for all correspondences $(U_j,V_j)_{j\in[k]}$,
 $$
  \ell^1(\fs_n,\fs_m)=\sum_{i\ge 1}^\infty\left|s_n^{(i)}-s_m^{(i)}\right|
                      \le\sum_{j\in[k]}\left|\Leb (U_j)-\Leb (V_j)\right|+\IPmag{\eta_n}+\IPmag{\eta_m} - \sum_{j\in[k]}\Leb(U_j)+\Leb(V_j),
 $$
%                      \le\sum_{j\in[k]}\left|\Leb (U_j)-\Leb (V_j)\right|+\sum_{j\ge k+1}\Leb (U_j)+\sum_{j\ge k+1}\Leb (V_j),
 Let $\epsilon>0$. By the Cauchy property of $((\eta_n,f_n),\ n\geq 1)$, there is some $N_1\ge 1$ so that $d_\cJ\big((\eta_n,f_n),(\eta_m,f_m)\big)<\epsilon/2$ for all $m,n\ge N_1$. Taking the infimum over all correspondences on the RHS of the display, this yields $\ell^1(\fs_n,\fs_m) < \epsilon$ for all $m,n\ge N_1$. By completeness of $(\cSd,\ell^1)$, we have convergence $\fs_n \rightarrow \fs = (s^{(i)},i\ge 1)\in\cSd$.
 
 Now consider any $r\ge 1$ such that $s^{(r)} > s^{(r+1)}$. Consider $\epsilon>0$ with $3\epsilon < s^{(r)}-s^{(r+1)}$. Then there is $N_2\ge 1$ such that for all $n\ge N_2$, there are precisely $r$ intervals $(a_1^{(n)},a_1^{(n)}+k_1^{(n)}),$ $\ldots,(a_r^{(n)},a_r^{(n)}+k_r^{(n)})\in\eta_n$ of length greater than $s^{(r)}-\epsilon$. We define
% $$\beta_n^{(r)} = \beta^{(0)}\oplus_{f_n(a_1^{(n)})}k_1^{(n)}\cdots\oplus_{f_n(a_r^{(n)})}k_r^{(n)}$$
 $
  \beta_n^{(r)} := \Concat_{j\in[r]} \left\{\left(0,k_j^{(n)}\right)\right\}
 $
 and associate to these intervals the $f_n$-values of the corresponding intervals in $\eta_n$:
 $$
  f_n^{(r)}\left( k_1^{(n)} + \cdots + k_{j}^{(n)} + x \right)=\left\{\begin{array}{ll}f_n\left(a_j^{(n)}\right)& \text{for }x\in [0,k_j^{(n)}),\ j\in [0,r-1],\\
  f_n(\infty)&\text{for }x\geq k_1^{(n)} + \cdots + k_{r}^{(n)}.
  \end{array}\right.
 $$
 Then $d_\cJ((\beta_n^{(r)},f_n^{(r)}),(\beta_m^{(r)},f_m^{(r)}))\le d_\cJ((\eta_n,f_n),(\eta_m,f_m))$, so $(\beta_n^{(r)},f_n^{(r)})$, $n\ge 1$, is a Cauchy sequence in $(\cJ,d_\cJ)$; and since for $n\ge N_2$
 $$
  d_\cJ(\beta_n^{(r)},\beta_m^{(r)}) = \max\left\{ \sup\nolimits_{j\in[r]}\left|f_n(a_j^{(n)})-f_m(a_j^{(m)})\right| ,\ \sum\nolimits_{j\in[r]}\left|k_j^{(n)}-k_j^{(m)}\right| \right\},
 $$
 the vector $\left(\left(f_n\left(a_j^{(n)}\right),k_j^{(n)}\right),\ 1\le j\le r\right)$ is a Cauchy sequence in the metric space $(\BR^{2r},\|\cdot\|_\infty)$. By completeness of $(\BR^{2r},\|\cdot\|_\infty)$, we have convergence to a limit $((f_j,k_j),1\le j\le r)$, which gives rise to a $d_\cJ$-limit $(\beta^{(r)}, f^{(r)})\in\cJ$ of $((\beta_n^{(r)},f_n^{(r)}),n\ge 1)$. By construction, $(\beta^{(r)},f^{(r)})$ is consistent as $r$ varies, in the sense that they are related by insertions of intervals of sizes from $\fs$, and natural correspondences demonstrate that convergence $(\beta^{(r)},f^{(r)})\rightarrow(\beta,f)$ holds in $\cJ$ for a limiting $(\beta,f)\in\cJ$ that incorporates intervals of all sizes $s^{(i)}$, $i\ge 1$.
 
 Finally, let $\epsilon>0$ and $N_1\ge 1$ be as above. Then there is $r$ large enough so that, following the notation of \eqref{eq:IP:separate_LT_mass},
 $$\IPmag{\beta^{\IPLT}_{s^{(r)}}} = \sum_{j\ge r+1}s_j < \epsilon/4.$$
 Since $\fs_n\rightarrow\fs$, there is $N_3\ge N_1$ such that for all $n\ge N_3$, we have $\ell^1(\fs_n,\fs)<\epsilon/4$. Finally, there is $N_4\ge N_3$ so that for all $n\ge N_4$ we have $d_\cJ\left(\left(\beta^{(r)}_n,f_n^{(r)}\right),\ \left(\beta^{(r)},f^{(r)}\right)\right) < \epsilon/4.$ Then for all $n\ge N_4$, we have
 \begin{align*}
  d_\cJ((\eta_n,\!f_n),(\beta,\!f)) &\leq d_\cJ((\eta_n,\!f_n),(\beta_n^{(r)}\!,\!f_n^{(r)}))+d_\cJ((\beta^{(r)}_n\!,\!f_n^{(r)}),(\beta^{(r)}\!,\!f^{(r)}))+d_\cJ((\beta^{(r)}\!,\!f^{(r)}),(\beta,\!f))\\
  &< \sum_{j=r+1}^\infty s_j+\ell^1(\fs_n,\fs)+\frac{\epsilon}{4}+\frac{\epsilon}{4}<\epsilon.
 \end{align*}
 Hence, $((\eta_n,f_n),\ n\ge 1)$ converges to $(\beta,f)$ in $(\cJ,d_\cJ)$. Therefore, $(\cJ,d_\cJ)$ is complete.
\end{proof}

\begin{corollary}\label{corcomplsep}
 $(\cJ,d_\cJ)$ and $(\HIPspace,d_H^\prime)$ are complete and separable metric spaces.
\end{corollary}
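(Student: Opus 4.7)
The plan is to handle the two spaces separately, deriving the properties for $(\HIPspace, d_H')$ from those of $(\cJ, d_\cJ)$ by an isometric embedding.

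For $(\cJ, d_\cJ)$, completeness is already contained in Lemma \ref{lmcompl}(iii), whose proof constructs a $d_\cJ$-limit for an arbitrary Cauchy sequence. For separability, I would combine three ingredients: the isometric embedding $\iota\colon\IPspace\to\cJ$ from Lemma \ref{lmcompl}, the density of $\iota(\IPspace)$ in $\cJ$ (Lemma \ref{lmcompl}(i)), and the separability of $(\IPspace,\dI)$ (Lemma \ref{lem:IP:sep}). Since $\iota$ is an isometry, the image of any countable dense subset of $(\IPspace,\dI)$ is countable and dense in $\iota(\IPspace)$, which is then dense in $\cJ$.

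For $(\HIPspace, d_H')$, I would introduce the auxiliary map $j\colon\HIPspace\to\cJ$ given by $j(\eta)=(\eta,\mathbf{0})$, where $\mathbf{0}$ denotes the constant zero function on $[0,\infty]$. A direct comparison of Definitions \ref{def:IP:metric} and \ref{def:dJ_metric} shows that when $f=g=\mathbf{0}$, quantities (iii) and (iv) of the distortion vanish, so $d_\cJ(j(\eta),j(\beta))$ reduces to the infimum over correspondences of $\max\{\text{(i)},\text{(ii)}\}$, which is precisely $d_H'(\eta,\beta)$. Thus $j$ is an isometric embedding.

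The remaining step is to verify that $j(\HIPspace)$ is closed in $(\cJ,d_\cJ)$, so that completeness and separability of $\cJ$ transfer to $\HIPspace$. If $(\eta_n,\mathbf{0})\to(\beta,f)$ in $d_\cJ$, then quantity (iv) forces $|f(\infty)|\to 0$, giving $f(\infty)=0$; since $f$ is nonnegative and increasing, this yields $f\equiv\mathbf{0}$, so $(\beta,f)=j(\beta)\in j(\HIPspace)$. This exhibits $(\HIPspace,d_H')$ as isometric to a closed subspace of the complete separable metric space $(\cJ,d_\cJ)$, delivering both properties at once. I do not foresee any serious obstacle; the only point requiring care is the isometry property of $j$, which falls out immediately from matching the two distortion definitions.
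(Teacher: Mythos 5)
Your proposal is correct and follows essentially the same route as the paper: completeness of $(\cJ,d_\cJ)$ from Lemma \ref{lmcompl}(iii), separability of $\cJ$ as the completion of the separable space $(\IPspace,\dI)$, and transfer to $(\HIPspace,d_H')$ via the isometric embedding $\eta\mapsto(\eta,\mathbf{0})$ onto a closed subset of $\cJ$. The only difference is that you spell out the isometry and closedness of $\HIPspace\times\{0\}$, which the paper merely asserts; your verification (quantity (iv) forces $f(\infty)=0$, hence $f\equiv 0$ by monotonicity) is sound.
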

\begin{proof} We have shown in the lemmas that $(\cJ,d_\cJ)$ is a complete metric space, and since the completion of a separable metric space is 
also separable, $(\cJ,d_\cJ)$ is also separable. As $(\HIPspace,d_H^\prime)$ has a natural isometrical embedding $\HIPspace\times\{0\}\subset\cJ$
and $\HIPspace\times\{0\}$ is closed in $(\cJ,d_\cJ)$, completeness and separability of $(\HIPspace,d_H^\prime)$ follow.
\end{proof}

\begin{corollary} The sets $\IPspace$ and $\HIPspace\setminus\IPspace$ are dense Borel subsets of $(\HIPspace,d_H^\prime)$.
\end{corollary}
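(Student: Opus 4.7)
The plan is to settle density and Borel-ness separately, each by combining results already established in this section.

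For density of $\IPspace$: By Proposition \ref{prop:Hausdorff}(iii), $d_H$ and $d_H^\prime$ generate the same topology, so it suffices to exhibit a $d_H$-dense subset of $\IPspace$. Interval partitions with only finitely many blocks are $d_H$-dense in $\HIPspace$---given $\beta$, drop its small blocks and slide the remaining ones together---and every finite interval partition lies in $\IPspace$ because the limit \eqref{eq:IPLT} vanishes for every $t$. Hence $\IPspace$ is $d_H^\prime$-dense.

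For density of $\HIPspace\setminus\IPspace$: Fix $\alpha'\in(\alpha,1)$ and choose a unit-mass $\eta^*\in\HIPspace$ whose blocks are an enumeration of lengths proportional to $k^{-1/\alpha'}$, $k\ge 1$, so that $\#\{U\in\eta^*\colon\Leb(U)>h\}$ is of order $h^{-\alpha'}$ as $h\downto 0$. For $\beta\in\HIPspace$ set $\beta_n:=\beta\concat(2^{-n}\scaleI\eta^*)$. Matching each block of $\beta$ with its translated copy in $\beta_n$ gives a correspondence of Hausdorff distortion at most $2^{-n}$, so $d_H^\prime(\beta,\beta_n)\le 2^{-n}$. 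On the other hand, at $t=\IPmag{\beta_n}$ the scaling identity \eqref{eq:IP:LT_scaling} shows that the appended piece contributes a quantity of order $2^{-n\alpha'}h^{\alpha-\alpha'}\to\infty$ to $h^\alpha\#\{(a,b)\in\beta_n\colon|b-a|>h,\,b\le t\}$, whereas the $\beta$-contribution stays bounded; thus the limit \eqref{eq:IPLT} does not exist at $t$, and $\beta_n\notin\IPspace$.

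For Borel-ness: we route through the completion $\iota\colon\IPspace\hookrightarrow\cJ$, $\iota(\eta)=(\eta,\IPLT_\eta(\,\cdot\,+))$, from Lemma \ref{lmcompl}. By Corollary \ref{corcomplsep}, $(\cJ,d_\cJ)$ is Polish, and by Lemma \ref{lmcompl}(ii) the image $\iota(\IPspace)$ is a Borel subset of $\cJ$. Consider the projection $\pi\colon\cJ\to\HIPspace$, $\pi(\eta,f)=\eta$; any correspondence witnessing $d_\cJ((\eta_1,f_1),(\eta_2,f_2))<r$ automatically witnesses $d_H^\prime(\eta_1,\eta_2)<r$, so $\pi$ is $1$-Lipschitz, hence Borel. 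Restricted to $\iota(\IPspace)$ the map $\pi$ is injective, since $\pi\circ\iota$ is the identity on $\IPspace$. The Lusin--Souslin theorem (see e.g.~Kechris, \emph{Classical Descriptive Set Theory}, Thm.~15.1) then implies that the injective continuous image $\IPspace=\pi(\iota(\IPspace))$ of the Borel set $\iota(\IPspace)$ is a Borel subset of the Polish space $(\HIPspace,d_H^\prime)$. The complement $\HIPspace\setminus\IPspace$ is then Borel as well.

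The main obstacle is the Borel-ness of $\IPspace$. A direct attempt using Lemma \ref{lem:dJ_meas}(ii) would need to encode the condition ``$\IPLT_\beta(t)$ exists for every $t\in[0,\IPmag{\beta}]$'', and monotonicity of the counting function alone does not reduce this to its rational-$t$ restriction: the diversity of $\beta$ may have jumps at irrational times where the finer limit in \eqref{eq:IPLT} could fail. Routing through the Polish companion space $\cJ$ already constructed in Lemma \ref{lmcompl} and invoking Lusin--Souslin side-steps this issue cleanly.
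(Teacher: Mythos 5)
Your proof is correct. The paper states this corollary without an explicit proof, presenting it as an immediate consequence of Lemma \ref{lmcompl} and Corollary \ref{corcomplsep}, so there is no official argument to compare line by line; your reconstruction is sound and in places supplies detail the paper's machinery does not give directly. For density of $\IPspace$ you rightly reduce to finite interval partitions (which have identically zero diversity, hence lie in $\IPspace$, and approximate any $\beta$ via $\beta^L_\epsilon$ from \eqref{eq:IP:separate_LT_mass}). Your explicit construction for the density of $\HIPspace\setminus\IPspace$ --- appending a rescaled partition with block lengths proportional to $k^{-1/\alpha'}$, $\alpha'\in(\alpha,1)$ --- is genuinely needed: the corresponding density statement in Lemma \ref{lmcompl}(i) is proved by perturbing the second coordinate $f$ while keeping $\eta$ fixed, a move that is unavailable after projecting to $\HIPspace$. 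For Borel-ness, your route (push the Borel set $\iota(\IPspace)\subset\cJ$ through the $1$-Lipschitz map $\pi$, which is injective on $\iota(\IPspace)$, and invoke Lusin--Souslin) is a clean alternative to the route the paper's lemmas suggest, namely repeating the proof of Lemma \ref{lmcompl}(ii) with $f$ replaced by the measurably constructed candidate $\IPLT^+_\beta$ (rational skeleton plus existence of $\IPLT_\beta$ at the jump times of $\IPLT^+_\beta$); both work, and you correctly diagnose why a naive ``for all $t$'' quantification over the Borel set of Lemma \ref{lem:dJ_meas}(ii) would only give a coanalytic set.

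Two small points to tidy. First, ``matching each block of $\beta$ with its translated copy in $\beta_n$'' is not a correspondence when $\beta$ has infinitely many blocks, since correspondences are finite by definition; match only the blocks of mass at least $\delta(\beta,\epsilon')$ and let $\epsilon'\downto 0$ to conclude $d_H^\prime(\beta,\beta_n)\le 2^{-n}$. Second, the claim that ``the $\beta$-contribution stays bounded'' is not justified for arbitrary $\beta\in\HIPspace$, and \eqref{eq:IP:LT_scaling} does not apply to $\eta^*\notin\IPspace$; but neither is needed: both counts are nonnegative, the appended piece alone forces $h^\alpha\,\#\{(a,b)\in\beta_n\colon b-a>h,\ b\le t\}\to\infty$ at $t=\IPmag{\beta_n}$, and so the limit in \eqref{eq:IPLT} cannot exist as a finite number.
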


\begin{lemma}
 The space $(\cJ,d_\cJ)$ is not locally compact.
\end{lemma}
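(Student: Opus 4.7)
\emph{Plan.} The strategy is to exhibit a single point of $\cJ$ with no compact neighborhood, which in a metric space suffices to show that $(\cJ, d_\cJ)$ is not locally compact. The chosen point is $(\emptyset, 0) \in \cJ$, the empty partition paired with the constant zero function. I would prove that for every $\epsilon > 0$, the closed ball of radius $\epsilon$ around $(\emptyset, 0)$ contains a sequence with no convergent subsequence, so it fails to be sequentially compact.

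Given $\epsilon > 0$, for each $n \geq 1$ let $\eta_n$ be the partition of $[0, \epsilon/2]$ into $n$ consecutive blocks of equal length $\epsilon/(2n)$, paired with $f_n \equiv 0$. The empty correspondence immediately gives $d_\cJ((\eta_n, f_n), (\emptyset, 0)) \leq \epsilon/2$, so the entire sequence lies well inside the ball. The key estimate is the lower bound $d_\cJ((\eta_n, f_n), (\eta_m, f_m)) \geq \epsilon/2$ whenever $m > 2n$. For any correspondence with $k \leq n$ pairs, quantities (iii) and (iv) from Definition \ref{def:IP:metric} (carried over to Definition \ref{def:dJ_metric}) vanish since both functions are zero, while quantities (i) and (ii) simplify to $\epsilon/2 - k\epsilon/(2m)$ and $k\epsilon/(2n) + \epsilon/2 - k\epsilon/m$ respectively. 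When $m > 2n$, quantity (ii) strictly exceeds $\epsilon/2$ for every $k \geq 1$, while $k = 0$ forces both (i) and (ii) to equal exactly $\epsilon/2$; taking the infimum over $k$ therefore gives $d_\cJ((\eta_n, f_n), (\eta_m, f_m)) = \epsilon/2$.

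Given any subsequence $(\eta_{n_k}, f_{n_k})$, the fact that $n_k \to \infty$ lets me choose, for each fixed $k$, an index $l$ with $n_l > 2 n_k$; this yields $d_\cJ((\eta_{n_k}, f_{n_k}), (\eta_{n_l}, f_{n_l})) = \epsilon/2$. So no subsequence is Cauchy and none converges in $\cJ$; the closed ball of radius $\epsilon$ around $(\emptyset, 0)$ is therefore not sequentially compact. Since this holds for every $\epsilon > 0$, no neighborhood of $(\emptyset, 0)$ has compact closure, and $(\cJ, d_\cJ)$ fails to be locally compact. The only nontrivial step is the case analysis underlying the lower bound for $d_\cJ((\eta_n, f_n), (\eta_m, f_m))$; this is an elementary calculation and presents no conceptual obstacle.
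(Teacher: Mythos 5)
Your proof is correct and follows essentially the same strategy as the paper: both exhibit, inside every ball around $(\emptyset,0)$, an infinite family of increasingly fine uniform partitions of a short interval (paired with the zero function) that are uniformly separated in $d_\cJ$, and conclude that no neighbourhood of $(\emptyset,0)$ can be compact. The only differences are cosmetic — you use $n$ equal blocks and a sequential-compactness argument where the paper uses dyadic partitions, a scaling map, and a finite-subcover argument — and your distortion computation checks out.
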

\begin{proof}
 Consider the interval partitions $\eta_n=\{((k-1)2^{-n},k2^{-n}),1\le k\le 2^n\}$ and $f_n\equiv 0$. For $m<n$, any correspondence for $\eta_m$ and $\eta_n$ that matches up any intervals of $\eta_m$ and $\eta_n$ attracts a term $2^{-m}-2^{-n}\ge 2^{-n}$, so it is best to use the trivial correspondence which gives $d_\cJ((\eta_n,f_n),(\eta_m,f_m))=1$. Now assume that $(\emptyset,0)\in\cJ$ has a compact neighbourhood $K$. Then $K$ contains an open ball of some radius $2\epsilon>0$, which contains $(\scaleI[\epsilon][\eta_n],0)$ for all $n\ge 1$. Covering $K$ with open balls of radius $\epsilon/2$, the open balls around $(\scaleI[\epsilon][\eta_n],0)$ are disjoint, so there cannot be a finite subcover. This contradicts the compactness of $K$. Hence $(\emptyset,0)$ does not have a compact neighbourhood, and $(\cJ,d_\cJ)$ is not locally compact. 
\end{proof}

Even though $(\cJ,d_\cJ)$ is not locally compact, we can now deduce that $(\IPspace,\dI)$ is Lusin:

\begin{proposition}\label{prop:Lusin2}
 The metric space $(\IPspace,\dI)$ is isometric to a path-connected Borel subset of a complete separable metric space $(\cJ,d_\cJ)$. Furthermore,
 $(\IPspace,\dI)$ is Lusin. %The space $(\cJ,d_\cJ)$ cannot be chosen locally compact. 
\end{proposition}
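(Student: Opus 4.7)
The plan is to assemble the proposition from results already in hand. For the first assertion---that $(\IPspace,\dI)$ is isometric to a path-connected Borel subset of a complete separable metric space---I would simply take that ambient space to be $(\cJ,d_\cJ)$. Lemma \ref{lmcompl}(iii) furnishes an isometric embedding $\iota\colon(\IPspace,\dI)\to(\cJ,d_\cJ)$, Lemma \ref{lmcompl}(ii) shows that $\iota(\IPspace)$ is a Borel subset of $\cJ$, and Corollary \ref{corcomplsep} asserts that $(\cJ,d_\cJ)$ is complete and separable. Path-connectedness of $\iota(\IPspace)$ follows from path-connectedness of $(\IPspace,\dI)$ (Lemma \ref{lem:IP:sep}) transported through the homeomorphism $\iota$.

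For the Lusin property, the strategy is to apply \cite[Theorem 2.8.2]{Dudley02}---the same tool used in Corollary \ref{cor3}---not to $(\IPspace,\dI)$ directly but to the separable metric space $(\cJ,d_\cJ)$. This produces a topologically equivalent metric on $\cJ$ together with an isometric (hence homeomorphic) embedding $\phi\colon\cJ\to K$ into some compact metric space $K$. Because $\phi$ is a homeomorphism onto its image, it identifies the Borel $\sigma$-algebra of $\cJ$ with the trace Borel $\sigma$-algebra on $\phi(\cJ)\subset K$, so $\phi(\iota(\IPspace))$ is Borel in $\phi(\cJ)$. To lift this to Borel-ness in $K$ itself, I would invoke Alexandrov's theorem: since $(\cJ,d_\cJ)$ is Polish (Corollary \ref{corcomplsep}), $\phi(\cJ)$ sits as a $G_\delta$, hence Borel, subset of $K$. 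Consequently $\phi(\iota(\IPspace))$ is Borel in the compact metric space $K$, and $\phi\circ\iota$ provides the required homeomorphism witnessing that $(\IPspace,\dI)$ is Lusin.

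The main obstacle was already resolved in Lemma \ref{lmcompl}(ii), where the measurability of the diversity operation had to be tracked carefully to identify $\iota(\IPspace)$ as a Borel subset of the completion $\cJ$. Once that Borel identification is in hand, the proof of Proposition \ref{prop:Lusin2} is a short sequence of standard invocations---Dudley's isometric embedding of separable metric spaces into compact ones, together with Alexandrov's characterization of completely metrizable subspaces as $G_\delta$'s---with no remaining subtlety.
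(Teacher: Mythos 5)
Your proof is correct and takes essentially the same route as the paper: both rest on Lemma \ref{lmcompl} (the isometric embedding $\iota$ of $(\IPspace,\dI)$ onto a Borel subset of $\cJ$) together with Corollary \ref{corcomplsep} (completeness and separability of $(\cJ,d_\cJ)$), and you additionally record path-connectedness via Lemma \ref{lem:IP:sep}. The only difference is that where the paper concludes by citing \cite[Theorem II.82.5]{RogersWilliams} for ``a Borel subset of a Polish space is Lusin,'' you prove that implication explicitly via Dudley's embedding of a separable metric space into a compact one plus Alexandrov's $G_\delta$ theorem --- which is the standard proof of the cited fact and is sound.
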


\begin{proof}%[Proof of Proposition \ref{prop:Lusin2} and Theorem \ref{thm:Lusin}.] 
Lemma \ref{lmcompl} and Corollary \ref{corcomplsep} yield that 
$(\IPspace,\dI)$ is isometric to a Borel subset of the Polish space $(\cJ,d_\cJ)$. By \cite[Theorem II.82.5]{RogersWilliams}, this implies $(\IPspace,\dI)$ is Lusin.
\end{proof}

%We now proceed towards proving that $\dI$ and $d_H$ generate the same Borel $\sigma$-algebra.  

%\pagebreak

\begin{proposition}\label{prop:haus:sig} Each of $\dI$, $d_H$, and $d_H'$ generate the same Borel $\sigma$-algebra on $\IPspace$. 
\end{proposition}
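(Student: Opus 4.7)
The plan is to show the two inclusions separately. The $d_H$-direction is free: Proposition \ref{prop:Hausdorff}(iii) already gives that $d_H$ and $d_H'$ generate the same topology on $\HIPspace$ (and hence the same Borel $\sigma$-algebra on every subset, including $\IPspace$). So it suffices to identify the Borel $\sigma$-algebras induced by $\dI$ and $d_H'$ on $\IPspace$.

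The easy half is that every $d_H'$-Borel subset of $\IPspace$ is $\dI$-Borel. This follows directly from Proposition \ref{prop:Hausdorff}(ii): since $d_H'(\beta,\gamma)\le \dI(\beta,\gamma)$, the identity map $(\IPspace,\dI)\to(\IPspace,d_H')$ is continuous, so pre-images of $d_H'$-open, and hence $d_H'$-Borel, sets are $\dI$-Borel.

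The reverse inclusion is the substantive step. The plan here is to invoke a standard descriptive-set-theory fact: a continuous bijection between two Lusin (standard Borel) spaces is automatically a Borel isomorphism (for example, via the Lusin--Souslin theorem \cite[Theorem II.82.5]{RogersWilliams}, or equivalently Kuratowski's theorem, applied to the map and its inverse). To apply it, I note that $(\IPspace,\dI)$ is Lusin by Proposition \ref{prop:Lusin2}; that $(\IPspace,d_H')$ is Lusin as well, since by Theorem \ref{thmdHprime}(d) it is a Borel subset of the Polish space $(\HIPspace,d_H')$ (cf.\ Corollary \ref{corcomplsep}); and that the identity $(\IPspace,\dI)\to(\IPspace,d_H')$ is a continuous bijection between them by the argument of the previous paragraph. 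The Lusin--Souslin theorem then asserts that the identity map sends $\dI$-Borel sets to $d_H'$-Borel sets, which is precisely the inclusion we need.

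The only subtlety, and the main obstacle to a completely routine argument, is checking that the framework of the descriptive-set-theory result applies cleanly in our setting. Concretely, via the isometric embedding $\iota$ of Lemma \ref{lmcompl}, $(\IPspace,\dI)$ is identified with the Borel subset $\iota(\IPspace)$ of the Polish space $(\cJ,d_\cJ)$, so the $\dI$-Borel sets are precisely the traces on $\iota(\IPspace)$ of the Borel sets of $\cJ$. Together with the embedding of $\IPspace$ as a Borel subset of the Polish space $(\HIPspace,d_H')$, this puts us exactly in the setting of continuous injective maps between Borel subsets of Polish spaces, as required. Combining the two inclusions finishes the proof.
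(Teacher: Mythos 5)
Your proof is correct, but the substantive half (every $\dI$-Borel set is $d_H'$-Borel) is argued by a genuinely different route from the paper. The paper proceeds concretely: using the finite truncations $(\beta_n,D_{\beta,n})$ and their $d_\cJ$-convergence to $(\beta,\IPLT_\beta(\,\cdot\,+))$ (Lemma \ref{lem:dJ_meas}(iii)), it writes each $\dI$-ball of radius $r$ explicitly as a countable union/intersection of sets of the form $\{\gamma\colon d_\cJ((\beta_n,D_{\beta,n}),(\gamma_n,D_{\gamma,n}))<r-m^{-1}\}$, which are $d_H'$-Borel by the measurability statements of Lemmas \ref{lem:dJ_metric} and \ref{lem:dJ_meas}; separability of $(\IPspace,\dI)$ (Lemma \ref{lem:IP:sep}) then upgrades ``balls are Borel'' to the full inclusion of $\sigma$-algebras. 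You instead apply the Lusin--Souslin theorem to the continuous bijection $\mathrm{id}\colon(\IPspace,\dI)\to(\IPspace,d_H')$ between two standard Borel spaces; this is valid, since $(\IPspace,\dI)$ is Lusin by Proposition \ref{prop:Lusin2} and $(\IPspace,d_H')$ is a Borel subset of the Polish space $(\HIPspace,d_H')$ by Theorem \ref{thmdHprime}(d), both established before this point, so there is no circularity. Your argument is shorter given the machinery already in place, at the cost of importing a nontrivial descriptive-set-theoretic input (injective Borel images of Borel sets are Borel), whereas the paper's computation is elementary and exhibits the Borel structure of $\dI$-balls explicitly. One small caveat: \cite[Theorem II.82.5]{RogersWilliams} is cited in the paper only for the fact that a Borel subset of a Polish space is Lusin, not for the injective-image theorem itself, so you should point to a precise statement of Lusin--Souslin (or Kuratowski's Borel-isomorphism theorem) for that step.
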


\begin{proof}%[Proof of Proposition \ref{prop:Hausdorff} (iv)]
 In light of Proposition \ref{prop:Hausdorff} (ii)--(iii), we need only check that all $\dI$-balls are Borel sets with respect to $d_H'$. Recall the notation $\beta_n$ and $D_{\beta,n}$ of introduced before Lemmas \ref{lem:dJ_metric} and \ref{lem:dJ_meas}. By Lemma \ref{lem:dJ_meas} (iii)%\ref{item:dJm:cnvgc}
, the $\dI$-ball of radius $r>0$ about $\beta$ equals
 $$\bigcup_{m\ge1}\bigcup_{N\ge1}\bigcap_{n>N}\left\{\gamma\in\IPspace\colon d_{\cJ}\big((\beta_n,D_{\beta,n}),(\gamma_n,D_{\gamma,n})\big) < r-m^{-1}\right\}.$$
 The claimed measurability now follows by Lemmas \ref{lem:dJ_meas} (i) %\ref{item:dJm:order_stats}
 and \ref{lem:dJ_metric} (ii).
% This is $d_H'$-measurable, as Lemma \ref{lem:Haus_Skor_fin} indicates that the sets inside of the intersection are pre-images of $[0,r)$ under measurable maps.
\end{proof}

%\printglossaries

%%%%%%%%%%%%%%%%%%%%%%%%%%%%%%%%%%%%%%%%%%%%%%%%%%%%%%%%%%%%
\bibliographystyle{abbrv}
\bibliography{AldousDiffusion}
%%%%%%%%%%%%%%%%%%%%%%%%%%%%%%%%%%%%%%%%%%%%%%%%%%%%%%%%%%%%
\end{document}